\newtheorem*{remark}{Remark}
\definecolor{armygreen}{rgb}{0.19, 0.53, 0.43}
\definecolor{atomictangerine}{rgb}{1.0, 0.6, 0.4}
\newtheorem{theorem}{Theorem}
\newtheorem{proposition}{Proposition}
\newtheorem{corollary}{Corollary}[theorem]
\title{\Large{Routing for unmanned  aerial vehicles: touring dimensional sets}}
\author{Justo Puerto \footnote{e-mail: puerto\@us.es. Corresponding author}, \\
IMUS, Universidad de Sevilla
\bigskip \\
Carlos Valverde \footnote{email: cvalverde\@us.es}\\
Universidad de Sevilla.}
\date{\today}
\begin{document}

\maketitle

\begin{abstract}
In this paper we deal with an extension of the crossing postman problem to design Hamiltonian routes that have to visit different shapes of dimensional elements (neighborhoods or polygonal chains) rather than edges. This problem models routes of drones that must visit a number of geographical elements to deliver some good or service and then move directly to the next target element using straight line displacements. We present two families of mathematical programming formulations. The first one is time-dependent and captures a number of actual characteristics of real applications at the price of using three indexes variables. The second one are not referenced to the stages of the route. We compare them on a testbed of randomly generated instances with different shapes of elements: second order cone representable (SOC) and polyhedral neighborhoods and polygonal chains. The computational result reported in this paper show that our models are useful and can solve to optimality medium size instances of sizes similar to other combinatorial problems with neighborhoods. To address larger instances we also present a heuristic algorithm that runs in two phases: clustering and VNS. This algorithm performs very well in quality of solutions provided and can be used to initialize the exact methods with promising initial solutions.
\end{abstract}

 \section{Introduction}

Drones, or UAVs (unmanned aerial vehicles), provide new opportunities for improving logistics in a variety of settings. We refer them as drones since the main characteristic that we wish to exploit is their aerial displacement using straight lines.
Recent technological improvements as battery life, better communication devices and reduction in manufacturing costs have increased the use of drones in  logistics. Drones have been used in many different fields as disaster management in remote regions (\cite{Knight2016}), parcel delivery \cite{Lavars2015}, communication coverage (\cite{Amorosi2018}), traffic monitoring, infrastructure inspection, coastal surveying and many other applications. The reader is referred to the review \cite{Otto2018} for further references.

The availability of this new technology has brought  new business opportunities and at the same time has  opened a lot of new challenges in the Operations Research field to propose solutions to new emerging problems in the areas logistics and routing. As drones play a growing role in business operations, questions of planning and optimization increase in practical and
academic importance. Actual characteristics of drone's displacement make most previous routing models in literature not readily applicable. Unlike standard ground vehicles, that must follow paths,  one of the specific characteristic of drones is their ability  to use direct connections by straight lines between targeted destinations because they can fly across areas.  


In 1962, Meigu Guan introduced the undirected Chinese Postman Problem (CPP) whose aim is to determine a least-cost closed route that traverses all edges of the graph. Orloff \cite{Orloff1974} extended the CPP to travel through a subset of required edges that is known as the Rural Postman Problem (RPP). Based on this idea, Garfinkel \cite{Garfinkel1999} relaxed the RPP to the case in which it is permitted to leave the edges of the network and cross from one edge to another at points other than the original vertices. These ARPs are studied in depth in \cite{Corberan2015}. On the other hand, this problem takes the structure of the well-known Traveling Salesman Problem that is studied in \cite{Gentilini2013} using convex sets. Yuan \cite{Yuan2017} presented in his work a hybrid framework in which metaheuristics and classical TSP solvers are combined strategically to produce high quality solutions for TSPN with arbitrary neighborhoods.



The aim of this paper is motivated by the design of drones' routes that must connect a number of targets with given shapes,  that we will call from now on \textit{elements},  that are spread on an area. These elements are dimensional. This means that they can not be represented, in general, by points. Moreover, they have some requirements of service beyond its simple visit: either one has to visit a percentage of its total length (assuming that their dimension is one) or one has to cross at least some distance over them (if they are two dimensional). Some previous attempt that can be applied to this problem is to approximate  all the shapes by polygons with sufficiently large number of breakpoints (see \cite{Campbell2018}). Nevertheless, we would like to exploit some new features of MINLP to address an alternative approach. Obviously, we have to impose some limit to the shapes of the considered elements to achieve tractable models. As a first building block, we restrict ourselves to two main types of elements (see Figure \ref{fig:elements}): convex bodies and piecewise linear chains (including segments). This limitation will be extended to more general shapes at the end of the paper. 

\begin{figure}[h!]
 \centering
 \includegraphics[width=0.7\linewidth]{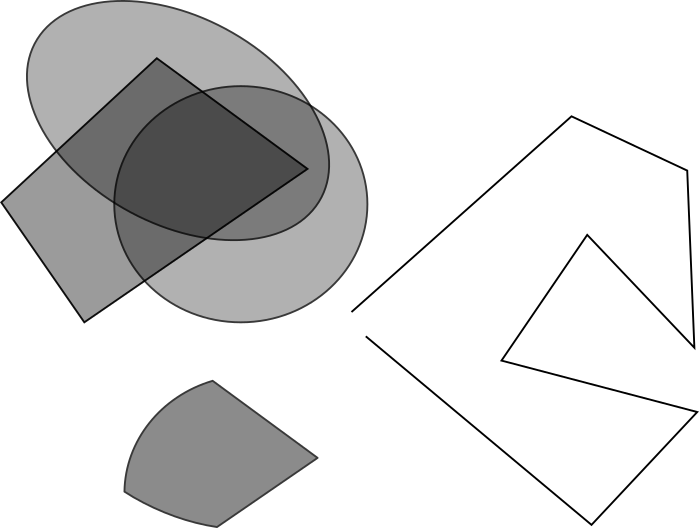}
 \caption{An example of convex sets and polygonal chains considered in the problem}
 \label{fig:elements}
\end{figure}

We assume a structure of costs trying to capture actual elements in these situations. The costs of moving between elements may change over time: it may be cheaper to go from A to B at time 1 than at time 2. The cost of moving on the elements may be cheaper or more expensive than moving freely over the ground: controlling the drone over polygonal chains to do some inspection may be more expensive than flying directly between targets. On the other hand, one can obtain some discount for flying over some large area (parks, lakes, natural reserves...) because the drone can do a secondary job, as reporting information, while doing its primary delivery job.

The goal is to find a Hamiltonian route that connects all the elements satisfies the visits' requirements and is of minimum total cost. 

The contribution of this paper is the combination on the same model of different elements that have never been put together before: design of Hamiltonian routes without underlying graph structure, required targets defined on elements (like in the RPP), free entry and exit points over the elements, use of dimensional elements (as in the TSP with neighborhoods). Combining these features altogether gives rise to a challenging new problem that is analyzed for the first time in this paper.  

The paper is structured in 8 sections. The first section is the introduction. In the second section we describe the problem and set the notation followed in the rest of the paper. Section \ref{sec:formulation} is devoted to present different valid formulations of the problem.
In Section \ref{sec:heur} we present a heuristic algorithm for solving XPPN. This heuristic has two phases clustering and VNS. The results show that it provides good quality solution in very limited computation time. Section \ref{section:bound} deals with some strengthening of our formulations: pre-processing variables and deriving valid inequalities to be added to the formulations. Next, in Section \ref{sec:benders} we present a decomposition algorithm \textit{'a la'} Benders that can be also applied to solve the problem. We derive all the details of that decomposition and show preliminary computational results.

An extensive computational experience is reported in Section \ref{sec:results}. There, we compare the different formulations in terms of final gaps and computing time. The paper ends with a section devoted to conclusions and extensions, where we list some interesting open lines of research connected with the problems addressed in this paper.

\section{Description of the Problem\label{sec:descri}}
Let $G=(V, E)$ be a connected undirected graph, whose vertices are embedded in $\mathbb R^2$. (The reader may note that extensions to the three dimensional space are possible at the price of increasing the models' complexity.) Associated with each vertex $v\in V$, we attach an element that can belong to two different types: either a convex set or a polygonal chain. In the former case, let $\mathcal C_v\subset\mathbb R^2$ denote the convex set associated to $v$ that must contain $v$ in its interior. In the latter, let  $\mathcal P_v\subset\mathbb R^2$ denote the polygonal chain attached to $v$ that we assume to be parameterized by its breakpoints $A_v^1,\ldots,A_v^{n_v+1}$, where $n_v$ is the number of line segments of the polygonal chain. We denote
$$V_{\mathcal C}=\{v\in V : v \text{ is associated with a convex set} \}, $$
$$V_{\mathcal P}=\{v\in V : v \text{ is associated with a polygonal chain} \}. $$

Feasible solutions to the Crossing Postman Problem with Neighborhoods (XPPN) problem consists of
a set of pairs of points, $X=\{(x_v^1, x_v^2) : v\in V\}$, that denotes the access and exit points to the component associated with vertices $v\in V$, together with a Hamiltonian tour $\mathcal T$ on the graph $G' = (X, E')$, with edge set $E'=E_{\text{out}} \cup E_{\text{in}}$, where:
$$E_\text{out}= \{(x_v^1, x_w^2) : (v, w)\in E\},\quad E_\text{in} = \{(x_v^1, x_v^2) : v \in V\}.$$

Edges in the set $E_\text{out}$ are links between different elements whereas those in $E_\text{in}$ are those that define the part of the tour that is traveled within the convex neighborhoods or the polygonal chains.
Edges lengths are given by the Euclidean distance, $\|\cdot\|_2$, between their endpoints.

The overall cost of a feasible solution $(X, \mathcal T)$ is then given by the overall sum of \textit{outside} edges plus the discounted sum of the inner edges:

$$d(X,\mathcal T) = \sum_{e_{vw}=(x_v^1, x_w^2)\in\mathcal T} d(x_v^1, x_w^2) + \sum_{e_v=(x_v^1, x_v^2)\in\mathcal T} f_v\,d(x_v^1, x_v^2),$$

\noindent where $f_v$ is a discount factor for traveling within the neighborhoods.

Throughout this paper we employ the following notation:
\begin{itemize}
 \item $\mathcal{T}_G$ as the set of incidence vectors associated with Hamiltonian tours
       on $G$, i.e., $\mathcal{T}_G=\{z\in\mathbb{R}_+^{|E|}:z\text{ is a Hamiltonian tour on } G\}$.
 \item $\mathcal{X}=\left(\prod_{v\in V_{\mathcal{C}}}\mathcal C_v\right)\cup \left(\prod_{v\in V_{\mathcal{P}}}\mathcal P_v\right)$, where $\mathcal C_v$ (resp. $\mathcal P_v$) is the neighborhood (resp. polygonal chain) associated to vertex $v$, which contains the possible sets of vertices for the Hamiltonian tours of XPPN.
\end{itemize}

The goal of XPPN is to find a feasible solution $(X, \mathcal T)$ of minimal total cost. Then, it can be expressed as:

\begin{mini}|s|
 {}{\sum_{e\in E_\text{out}} d_e z_e + \sum_{e\in E_{in}} f_e d_e}{}{}
 \addConstraint{z\in\mathcal{T}_G,\quad }{x\in\mathcal{X}}{}{}
\end{mini}

Some observations can be stated from the formulation above:
\begin{enumerate}
 \item Fixing $x\in\mathcal X$ results in a Rural Postman Problem that has been analyzed in the literature and the state of current research provides good heuristics to solve it efficiently.
 \item Because of the expression of its objective function, XPPN is not separable. It is suitable to represent this problem as a MINLP.
\end{enumerate}

In this paper, we focus on the case where the sets $\mathcal C_v$ are second order cone (SOC) representable, that is, the sets can be expressed by using second-order cone constraints as follows:

\begin{equation}\label{C-C}\tag{$\mathcal{C}$-C}
    \|B_ix + b_i\|\leq c_i^Tx + d_i,\quad i=1,\ldots,N,
\end{equation}
where $x\in\mathbb R^2$ is the decision variable and $B_i, b_i, c_i$ and $d_i$ are parameters of the constraint. Note that these constraints can model linear constraints (for $B_i, b_i\equiv 0$), ellipsoids and hyperbolic constraints (see \cite{Lobo1998} for more details).

These type of elements could be extended further to unions of SOC representable sets. This type of  neighborhood is obtained introducing binary variables, whose meaning is similar to those in disjunctive programming. Thus, we can determine in which set of the union happen the access or the departure points in those unions of sets.

Let $\{\mathcal C_v^1, \ldots, \mathcal C_v^{N_v}\}$ be the second order cone representable sets that makes the neighborhood associated to the vertex $v$ and let $\mathcal U_v = \displaystyle \bigcup_{j=1}^{N_v} \mathcal C_v^j$ denote the union of these sets. Consider the binary variable  $\chi_v^{ij}$ that assumes the value of one if $x_v^i$ is located in the set $\mathcal C_v^j$ and zero otherwise. Thus, for each $v\in V$, we can model that $x_v^i \in \mathcal U_v$ by using the following inequalities for each $i=1,2$:

\begin{equation}\label{U-C}\tag{$\mathcal U$-C}
 x_v^i\in \mathcal U_v \Longleftrightarrow
 \left\{
 \begin{array}{cclr}
  \|A_v^j x_v^i + b_v^j\|& \leq & (c_v^j)^T x_v^i + M_v^j(1-\chi_v^{ij}), & j=1,\ldots,N_v, \\
  \sum_{j = 1}^{N_v} \chi_v^{ij} & =    & 1.
 \end{array}
 \right.
\end{equation}

The reader may observe that one can replace \ref{C-C} by \ref{U-C} in all our formulations without altering their validity. Therefore, our model can deal easily with these more general forms of neighborhoods.

On the other hand, the second type of elements are the piecewise linear constraints. Let $n_v$ be the number of line segments of the polygonal chain $v$. Since we need to refer to interior points of the segment, these continuum of points  is parametrized by the two endpoints of the segment: $x\in [A_v^j,A_v^{j+1}]$ if and only if $\exists \gamma\in [0,1]$ such that $x=\gamma A_v^j+(1-\gamma) A_v^{j+1}$. In order to deal with them, we  introduce the following variables for each vertex $v\in V_\mathcal{P}$ and $i=1,2$:

\begin{itemize}
 \item $u_v$: Binary variable that determines the traveling direction in the polygonal chain $v$. 
 \item $\gamma_v^{ij}$: Continuous variable in $[0,1]$ that represents the parameter value of the $x_v^i$ variable in the line segment $j$ of the polygonal chain $v$, $j=1,\ldots,n_v$.
 \item $\mu_v^{ij}$: Binary variable that is one when $x_v^i$ is located in the line segment $j$ of the polygonal chain $v$, and zero otherwise, for $j=1,\ldots,n_v$.
\end{itemize}

Using these variables, we can model the polygonal chain introducing the following inequalities for each $i = 1, 2$:


\begin{equation}\label{P-C}\tag{$\mathcal P$-C}
\small
 x_v^i\in \mathcal P_v \Longleftrightarrow
 \left\{
 \begin{array}{cclr}
  \lambda_v^i - j                    & \geq & \gamma_v^{ij} - (n_v+1)(1-\mu_v^{ij}),                            & j=2,\ldots,n_v+1 \\
  \lambda_v^i - j                    & \leq & \gamma_v^{ij} + (n_v+1)(1-\mu_v^{ij}),                            & j=2,\ldots,n_v+1 \\
  \gamma_v^{i1}                      & \leq & \mu_v^{i1} & \\
  \gamma_v^{ij}                      & \leq & \mu_v^{ij-1} + \mu_v^{ij}                                         & j=2,\ldots,n_v\\
  \gamma_v^{in_v}                    & \leq & \mu_v^{in_v} \\
  \sum_{j = 1}^{n_v} \mu_v^{ij}      & =    & 1 \\
  \sum_{j = 1}^{n_v+1} \gamma_v^{ij} & =    & 1 \\
  x_v^i                              & = & \sum_{j=1}^{n_v+1}\gamma_v^{ij}A_v^j \\
 \end{array}
 \right.
\end{equation}

Observe that the first and second inequalities determine the lower and upper limits of each segment of $\mathcal{P}_v$. If $\mu_v^{ij}=0$ the inequalities are always fulfilled and there is no distinguished (entry or exit) point in the $j$-th segment. The third and fourth inequalities link $\mu$ and $\lambda$ variables. They state that the variable $\gamma^{ij}$ that gives the representation of a point $x_i$ on the line segment $j$ is active (non-null)  only if this line segment is chosen (to enter of exit), i.e., $\mu^{ij}=1$. The fifth equation sets that only one line segment is chosen for entering or leaving each polygonal chain.  Finally, the sixth equation and seventh inequality sets the representation of $x^i$ as a convex combination of the extreme points of the adequate line segment. 

In addition, we assume that the tour must traverse at least some given percentage $\alpha_v$  of each polygonal chain  total length. Denoting by $\lambda_i$ the parameter value of $x_v^i$ in the parametrization of the polygonal chain $\mathcal{P}_v$ and $\lambda^{\text{min}}_v$ and $\lambda^{\text{max}}_v$ the parameter values $\lambda$ of the access (entry) and exit points to  $\mathcal{P}_v$, respectively, we can model that condition by the following absolute value constraint:

\begin{equation}\label{alpha-C}\tag{$\alpha$-C}
 |\lambda_v^1-\lambda_v^2|\geq \alpha_v \Longleftrightarrow
 \left\{
 \begin{array}{ccl}
  \lambda_v^1 - \lambda_v^2                       & =    & \lambda^{\text{max}}_v - \lambda^{\text{min}}_v \\
  \lambda^{\text{max}}_v + \lambda^{\text{min}}_v & \geq & \alpha_v n_v                                    \\
  \lambda^{\text{max}}_v                          & \leq & n_v(1-u_v)                                      \\
  \lambda^{\text{min}}_v                          & \leq & n_v u_v,                                        \\
 \end{array}
 \right.
\end{equation}

The above modeling assumptions are sufficient to model a wide range of actual situations that appear in the routing problems with drones that we want to model. Obviously, they could be more general at the price of not being easily implemented with off-the-shelf solvers.

\subsection{Some interesting particular cases}
Three very interesting well-known models appear as particular cases of the problems that can be modeled within our framework. If the element associated with each vertex is the empty set the problem reduces to the standard traveling salesman problem. If the element associated with each vertex $v\in V$ is a segment $C_v= [x_v^1,x_v^2]$, then XPPN becomes the classical Rural Postman Problem in which the edges $(x_v^1, x_v^2)$ are required, in the complete graph induced by these vertices with edge lengths given by the Euclidean norm distance.
On the other hand, if the considered neighborhoods are big enough so that $\cap_{v\in V} \mathcal C_v\neq\emptyset$, then the problem reduces to finding a degenerate one-vertex tour and the solution to the XPPN is that vertex with cost 0. Finally, if all neighborhoods are line segments and no condition is imposed on required percentage of visit, we obtain the Crossing Postman Problem which is studied in \cite{Garfinkel1999}.

Figure \ref{fig:ex1} shows an example of the solution obtained for a case in which the elements are circles, triangles and we also have two polygonal chains to visit in our required route.
\begin{figure}[h!]
\begin{center}
 \includegraphics[width=1\linewidth]{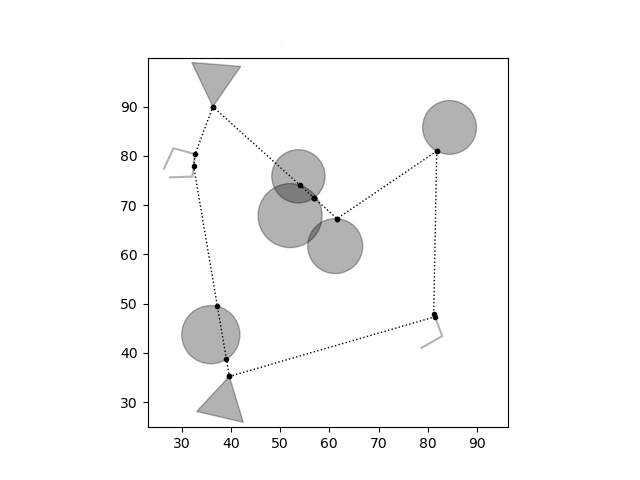}
\end{center}
\caption{An example with 9 elements: 7 convex sets and 2 polygonal chains\label{fig:ex1}}
\end{figure}

The discussion above allows us to state the complexity of the XPPN.
\begin{theorem}
The decision version of the problem XPPN, given a length $L$ to decide whether the graph has a XPPN tour of length at most $L$,  is NP-complete.
\end{theorem}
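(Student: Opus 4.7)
The plan is to prove the two parts of NP-completeness separately, with the bulk of the work being a simple reduction that exploits one of the particular cases highlighted just before the theorem.

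For membership in NP, the certificate I would use is the Hamiltonian ordering of the vertices together with rational encodings of the access and exit points $x_v^1, x_v^2$ for every $v\in V$. Given such a certificate, the verifier checks in polynomial time that (i) each pair $(x_v^1,x_v^2)$ satisfies the SOC constraints \ref{C-C} (or the piecewise-linear system \ref{P-C}), (ii) the required percentage constraint \ref{alpha-C} holds for every polygonal chain, and (iii) the weighted Euclidean length of the proposed tour does not exceed $L$. As is standard for Euclidean routing problems, the comparison of a sum of square roots to $L$ raises the usual subtlety that affects Euclidean TSP itself; I would handle it in the same way as Papadimitriou's treatment of Euclidean TSP, noting that with rational input an $\varepsilon$-precise rational witness suffices.

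For NP-hardness, I would reduce from Euclidean TSP in $\mathbb{R}^2$, which is known to be NP-hard. Given an instance with points $p_1,\dots,p_n$ and bound $L$, build an XPPN instance by taking $G$ to be the complete graph on $V=\{1,\dots,n\}$, setting $V_{\mathcal{P}}=\emptyset$ and, for each $v\in V$, letting $\mathcal C_v=\{p_v\}$. Singletons are SOC-representable (they are the intersection of two half-spaces in each coordinate), so the construction fits the model. Since $\mathcal C_v$ is a single point, any feasible solution is forced to take $x_v^1=x_v^2=p_v$, making every inner edge of length zero. The XPPN objective therefore collapses to $\sum_{(v,w)\in\mathcal T} \lVert p_v-p_w\rVert_2$, which is exactly the Euclidean TSP cost on $\{p_1,\dots,p_n\}$. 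Hence the XPPN instance admits a tour of cost at most $L$ if and only if the Euclidean TSP instance does, and the reduction is clearly polynomial.

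I expect the main obstacle to be not the hardness direction — it is essentially the observation already made in the ``interesting particular cases'' paragraph — but the NP-membership step, because the optimal entry/exit coordinates need not be rational and the cost involves sums of square roots. I would sidestep this exactly as is customary in the Euclidean TSP literature, which makes the overall argument clean and short.
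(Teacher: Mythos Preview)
Your proposal is correct and follows essentially the same route as the paper, which simply observes that (Euclidean) TSP is a special case of XPPN obtained by collapsing each neighborhood to a single point. In fact you are more careful than the paper: it offers no discussion of NP membership or the sum-of-square-roots issue, giving only the one-line reduction from TSP.
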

The proof follows using a reduction from TSP that as shown above is a particular case of this problem.

\section{Mixed Integer Non Linear Programming Formulations\label{sec:formulation}}
In this section we present alternative MINLP formulations for the XPPN that
will be compared computationally in later sections. First, we start with a time dependent formulation that allows us to include a number of actual requirements in the modeling phase such as time dependent travel distances, time windows or time dependent discount factors. Then, we give another formulation that does not make reference to stages in the routes and that simplifies the model at the price of losing some of the above mentioned characteristics.

\subsection{A Time Dependent Formulation}
One way to model the routes in our problem is to make variables dependent on the index of the stage when an element is visited in the sequence of visited elements. Thus, this formulation requires binary variables depending on the index order when they are chosen. Since variables depend on time also parameters in the problem as discount factors for traveling the neighborhoods  $(b_v^t)$ and distances $d^t$ (as proxy for travel times $\beta^t$) can be dependent on the instant when they are traveled. 

To model the problem, we introduce a binary variable $y_v^t$ to select that the element associated with vertex $v$ is visited at stage $t$. In addition, we define the following  variables:

\begin{itemize}
 \item $y_v^t$: Binary variable whose value is one when $v$ is visited at the $t$ position in the route sequence and zero otherwise.
 \item $z_{vw}^t$:	Binary variable that is one when $v$ and $w$ are visited consecutively, assuming that $v$ is visited at the instant $t$ and zero otherwise.
 \item $z_{vw}^t = y_v^t\,y_{w}^{t+1}$, $v\neq w$.
 \item $d_{vw}^t$: Continuous variable that represents the distance between  pairs of chosen points $v,w$ from different components at the instant $t$.
 \item $d_v^t$: Continuous variable that represents the distance between two consecutive points within the same component associated with $v\in V$ at the instant $t$.
 \item $\lambda^1_v,\,\lambda^2_v$: Continuous variables determining the position of  $x_v^1$ and $x_v^2$, respectively,  in the polygonal chain $\mathcal{P}_v$.
\end{itemize}

Using these variables, the first formulation follows:
\begin{mini!}|s|
{}{\sum_{t=1}^{|V|}\sum_{v\neq w} d_{vw}^tz_{vw}^t+\sum_{t=1}^{|V|}\sum_{v\in V}f_v^t d_v^t\label{eq:obj_fun_time}}{}{}
\addConstraint{d_{vw}^t}{\geq \beta_{uv}^t \|x_v^2 - x_w^1\|,}{\qquad\forall v\neq w}
\addConstraint{d_v^t}{\geq \beta_{v}^t \|x_v^1 - x_v^2\|,}{\qquad\forall v\in V}
\addConstraint{\sum_{v\in V}y_v^t}{= 1, \label{eq:connectivity1a}}{\qquad\forall t}
\addConstraint{\sum_{t=1}^{|V|}y_v^t}{= 1, \label{eq:connectivity1b}}{\qquad\forall v\in V}
\addConstraint{y_v^t + y_{w}^{t+1}-1}{\leq z_{vw}^t, \label{eq:subtour1}}{\qquad\forall v\neq w,\,t=1,\ldots,|\mathcal C|-1}
\addConstraint{\eqref{C-C}, \eqref{P-C}, \eqref{alpha-C} \label{domain}}{}{}
\end{mini!} 

The first addend of the objective function \eqref{eq:obj_fun_time} includes the traveling distance among different elements while the second one accounts for the distances between the entry and exit points of each component taking into account the discount factor for travelling within this component at the instant $t$. Constraints \eqref{eq:connectivity1a} and \eqref{eq:connectivity1b} state, respectively, that in each instant the route visits one element and each component is traversed once and only once. Constraint \eqref{eq:subtour1} is obtained by linearizing $z_{vw}^t$ and ensures that if we travel from $v$ to $w$ assuming that we are in $v$ at the instant $t$, then we visit $v$ in $t$ and $w$ in $t+1$. Constraint \eqref{domain} refers to the domain of the entry and exit points of each element in the problem, as well as the minimal required percentage of the polygonal chain length that must be traversed. They were defined in Section \ref{sec:descri}.

Despite the versatility of this formulation for capturing actual characteristics of drone routes, its drawback comes from the three index dimension of its variables which makes it difficult to handle medium size instances. In the next section, we shall simplify this formulation making it independent of time at the price of losing some of its real-world characteristics. Because of that, the following
step consists on reducing the dimension of z variables and simplifying useless variables.

\subsection{Non-Time Dependent Formulations}\label{sec:nontimeformulations}
The simplification mentioned above can be performed based on the rationale of ensuring connectivity through different sets of inequalities. In particular, we compare Miller-Tucker-Zemlin (MTZ) inequalities and  subtour elimination constraints (SEC).
All formulations use the following sets of decision variables:
\begin{itemize}
 \item Binary variables $z_e\in\{0,1\},\,e\in E$, to represent the edges of the tours.
 \item Continuous variables $d_e\geq 0,\,e=\{v, w\}\in E_\text{out}\subseteq E'$, to
       represent the distance $d(x^1_v, x^2_w)$ between the pairs of selected points
       of different elements (neighborhoods) and $d_v\geq 0,\,v\in V$, to represent the distance
       $d(x^1_v, x^2_v)$ between the pairs of points of the same element.
\end{itemize}

Let
$$\mathcal D_e=\{d\in\mathbb R_+^{|E_\text{out}|}:d_e\geq d(x_v^1, x_w^2), \forall e=(v, w)\in E_\text{out},\,x\in\mathcal X\},$$
$$\mathcal D_v=\{d\in\mathbb R_+^{|E_\text{in}|}:d_v\geq d(x_v^1, x_v^2), \forall v\in V,\,x\in\mathcal X\},$$
denote the domains for the feasibility of the $d$ variables. Then, a generic
bilinear formulation for XPPN is

\begin{mini*}|s|
 {}{\sum_{e\in E_\text{out}} d_e z_e + \sum_{e\in E_\text{in}} f_v d_v}{}{}\tag{Pdz}
 \addConstraint{}{z\in\mathcal{T}_G,d_e\in\mathcal{D}_e,\,d_v\in\mathcal D_v}{}{}
 \addConstraint{\eqref{C-C}, \eqref{P-C}, \eqref{alpha-C}}{}{}
\end{mini*}
The reader should observe that, as already mentioned, the above formulation is bilinear since the first term of the objective function contains products of variables of the form $d_ez_e$, for $e\in E_\text{out}$.

Next, we use McCormick's envelopes \cite{McCormick1976} for the linearization of those bilinear terms of the objective function. We define additional variables $p_e\geq 0,\,e\in E_\text{out}$ that stand for that product.

Replacing the products by the new variables and introducing a new set of constraints enforcing the correct representation, we obtain the following formulation:
\begin{mini*}|s|
 {}{P = \sum_{e\in E_\text{out}} p_e + \sum_{v\in V} f_v d_v}{}{}\tag{RL-XPPN}
 \addConstraint{p_e}{\geq d_e - M_e(1 - z_e)\quad}{\forall e\in E_\text{out}}{}
 \addConstraint{\eqref{C-C}, \eqref{P-C}, \eqref{alpha-C}}{}{}
 \addConstraint{}{z\in\mathcal{T}_G,\quad d_e\in\mathcal{D}_e,\,d_v\in\mathcal D_v\quad p_e\geq 0,\quad}{\forall e\in E_\text{out}.}{}
\end{mini*}

Here $M_e$ denotes an upper bound of the distance between the sets that are joined by $e$.

In addition, we describe the sets $\mathcal D_e$ and $\mathcal D_v$ using the constraints
\begin{align}
 \|x_v^1 - x_w^2\|_2 & \leq d_e,\quad\forall e=\{v, w\}\in E_\text{out}, \label{D1}\tag*{(D$_1$)} \\
 \|x_v^1 - x_v^2\|_2 & \leq d_v,\quad\forall v\in V, \label{D2}\tag*{(D$_2$)}                     \\
 x                   & \in  \mathcal X, \tag*{(D$_3$)}
\end{align}

\noindent which set the distance values and impose that $x$ belongs to its suitable neighborhood.

Furthermore, this formulation can be reinforced by adding the valid inequalities:
$p_e\geq m_e z_e,\forall e\in E_\text{out}$ and $d_v\leq M_v,\forall v\in V$, where $m_e$ and $M_v$ are bounds that are adjusted in Section \ref{section:bound}. The first family of valid inequalities sets lower bounds on the values for $p_e$ whereas the second ones sets upper bounds on the distances traveled within neighborhoods.

The above discussion leads us to strengthen a generic formulation for XPPN. This formulation will be particularized once the connectivity condition of the solutions is specifically introduced in the model.

\begin{mini*}|s|
 {}{P = \sum_{e\in E_\text{out}} p_e + \sum_{v\in V} f_v d_v}{}{}
  \addConstraint{p_e}{\geq d_e - M_e(1 - z_e)\quad}{\forall e\in E_\text{out}}{}\label{LIN-Mc}\tag{LIN-Mc}
 \addConstraint{p_e}{\geq m_e z_e\quad}{\forall e\in E_\text{out}}{}
 \addConstraint{d_v}{\leq M_v\quad}{\forall v\in V}{}
 \addConstraint{\eqref{C-C}, \eqref{P-C}, \eqref{alpha-C}}{}
 \addConstraint{z\in\mathcal{T}_G}{} 
\end{mini*}

The two formulations that we present below differ from one another in the family of constraints used to enforce connectivity. One of them is by the family of subtour elimination constraints (SEC) \cite{Edmonds2003}. The other one relies on a compact formulation based on the well-known Miller-Tucker-Zemlin (MTZ) constraints \cite{Miller1960}.

\subsubsection{A valid formulation for XPPN based on SECs.}
The family of SEC is well-known in combinatorial optimization. It enforces connectivity by imposing that the number of edges among any subset of vertices can not exceed its cardinality minus one. Augmenting these constraints into the generic formulation presented above we obtain the following valid formulation for XPPN:

\begin{mini*}|s|
 {}{P = \sum_{e\in E_\text{out}} p_e + \sum_{v\in V} f_v d_v}{}{}\label{SEC-XPPN}\tag{SEC-XPPN}
 \addConstraint{\eqref{LIN-Mc}, \eqref{D1}, \eqref{D2}}{}
 \addConstraint{\sum_{w\in V\setminus\{v\}} z_{vw}}{=1,\quad}{\forall v \in V}{}\label{C1}\tag{C$_1$}
 \addConstraint{\sum_{w\in V\setminus\{v\}} z_{wv}}{=1,\quad}{\forall v \in V}{}
 \label{C2}\tag{C$_2$}
 \addConstraint{\sum_{e=(v, w):v,w\in S} z_e}{\leq |S| - 1,\quad}{\forall S\subset V}{}\label{SEC}\tag{SEC}
 \addConstraint{\eqref{C-C}, \eqref{P-C}, \eqref{alpha-C}}{}
\end{mini*}

Assignment Constraints \eqref{C1} and \eqref{C2} ensure that any feasible solution found enters and exits each component of the problem exactly once. Constraint \eqref{SEC} prevents the existence of subtours. This constraint forces that in any subset of nodes $S$ included in $V$ there can not be more edges between nodes in $S$ than the number of nodes that contains minus one, thus avoiding the existence of cycles.

Since there is an exponential number of SEC constraints, when we implement this formulation we need to perform a row generation procedure including constraints whenever they are required by a separation oracle. To find SEC inequalities, as usual, we search for disconnected components in the current solution. Among them, we choose the shortest subtour found in the solution to be added as a lazy constraint to the model.

If the considered distance between components is symmetric, we obtain the symmetric formulation based on SECs, denoted by (sSEC-XPPN)\label{sSEC-XPPN}. In this formulation, we can halve the number of binary variables and replace constraints \eqref{C1} and \eqref{C2} in \eqref{SEC-XPPN} by the following connectivity restrictions:
\begin{equation*}
    \sum_{w\in V\setminus\{v\}}z_{wv} = 2,\qquad \forall v\in V.
\end{equation*}

\subsubsection{XPPN formulation based on the Miller Tucker Zemlin.}
This section addresses an alternative formulation that results replacing SEC inequalities by the so called Miller-Tucker-Zemlin constraints \cite{Miller1960}. In this formulation, we introduce the integer variable $s_v$ to generate an alternative formulation that eliminates the subtours and the exponential number of inequalities of \eqref{SEC-XPPN}.
\begin{mini*}|s|
 {}{P = \sum_{e\in E_\text{out}} p_e + \sum_{v\in V} f_v d_v}{}{}\label{MTZ-XPPN}\tag{MTZ-XPPN}
 \addConstraint{\eqref{LIN-Mc}, \eqref{D1}, \eqref{D2}}{}{}{}
 \addConstraint{\sum_{w\in V\setminus\{v\}} z_{vw}}{=1,\quad}{\forall v \in V}{}\tag{C$_1$}
 \addConstraint{\sum_{w\in V\setminus\{v\}} z_{wv}}{=1,\quad}{\forall v \in V}{}\tag{C$_2$}
 \addConstraint{|V|z_{vw} + s_v - s_w}{\leq |V| - 1,\quad}{\forall e=(v,w)\in E_\text{out}}{}\label{MTZ1}\tag{MTZ$_1$}
 \addConstraint{s_1}{= 1}{}{}\label{MTZ2}\tag{MTZ$_2$}
 \addConstraint{2}{\leq s_v\leq |V|, \quad}{\forall v\in V}{}\label{MTZ3}\tag{MTZ$_3$}
 \addConstraint{s_v-s_w + |V|z_{wv}}{\leq |V|-1,\quad}{\forall e=(v,w)\in E_\text{out}, w>1}\label{MTZ4}\tag{MTZ$_4$}
\addConstraint{s_v-s_w + (|V|-2)z_{wv}}{\leq |V|-1,\quad}{\forall e=(v,w)\in E_\text{out}, v>1}\label{MTZ5}\tag{MTZ$_5$}
 \addConstraint{\eqref{C-C}, \eqref{P-C}, \eqref{alpha-C}}{}{}
\end{mini*}

Again constraints \eqref{C1} and \eqref{C2} require that in each feasible solution only one edge departs from node $v$ and only one edge enters at node $v$ for any $v\in V$, respectively. The constraints \eqref{MTZ1}-\eqref{MTZ3} enforce connectivity, i.e., that there is only a single tour covering all vertices. The constraints \eqref{MTZ4} and \eqref{MTZ5} define the intermediate conditions for the  tour that may improve performance of subtour elimination constraints (see \cite{Sawik2016} for more details).

For the sake of completeness, we include as a remark a well-known explanation of these last constraints \cite{Miller1960}.

\begin{remark}
Constraints \eqref{MTZ1}-\eqref{MTZ3} model the elimination of subtours.

To show that, we must see that every feasible solution contains only one closed sequence of cities and that for every single tour covering all cities, there are values for the dummy variables $s_{v}$ that satisfy the constraints.

To prove that every feasible solution contains only one closed sequence of cities, it suffices to show that every subtour in a feasible solution passes through city 1. If we sum all the inequalities corresponding to $z_{vw}=1$ for any subtour of $k$ steps, we obtain:

$$|V|k\leq (|V|-1)k, $$
which is not possible.

It now must be shown that for every single tour covering all cities, there are values for the dummy variables $u_{i}$ that satisfy the constraints. If we define $s_{v}=t$ if city $v$ is visited in step $t$. Then
$$|V|z_{vw} + |V| - 2 \leq |V| z_{vw} + s_v - s_w \leq n-1,$$
forces $z_{vw}=0$. For $z_{vw}=1$, we have:

$$s_{v}-s_{w}+|V|z_{vw}= t-(t+1)+|V|=|V|-1,$$
satisfying the constraint.
\end{remark}

Now we state a result related to the relationship between the SEC and MTZ polytopes of our formulations of the XPPN, that is, the feasible regions of the respective LP relaxations of these models.
\begin{theorem}
The SEC polytope is contained in the MTZ polytope for the XPPN. 
\end{theorem}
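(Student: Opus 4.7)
The plan is to show that the projection of the LP relaxation of \eqref{MTZ-XPPN} onto the variables shared with \eqref{SEC-XPPN} contains the LP relaxation of the latter; equivalently, every point feasible for the relaxation of \eqref{SEC-XPPN} admits an extension by $s$-coordinates that is feasible for the relaxation of \eqref{MTZ-XPPN}. This reformulation is legitimate because both models share the variables $x$, $z$, $d_e$, $d_v$, $p_e$ and impose on them identical constraints: the objective, the McCormick linearization \eqref{LIN-Mc}, the degree constraints \eqref{C1}--\eqref{C2}, the distance constraints bounding $d_e$ and $d_v$, and the geometric/neighborhood constraints \eqref{C-C}, \eqref{P-C}, \eqref{alpha-C}. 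Hence the only content of the theorem concerns the routing substructure, and in particular how connectivity is encoded.

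First, I would fix any $z^*$ satisfying the degree constraints and all SECs, and recast \eqref{MTZ1}--\eqref{MTZ5} as a system of difference constraints of the form $s_v - s_w \leq c_{vw}(z^*)$, together with $s_1 = 1$ and $2\leq s_v\leq |V|$. By the classical characterization of feasibility for difference systems (equivalently, the Bellman--Ford criterion), this system is solvable if and only if the associated directed constraint graph carries no cycle of strictly negative total weight, with the pinning $s_1=1$ and the box bounds absorbed by adjoining two auxiliary source/sink nodes that carry the corresponding edges.

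Next, I would analyze every directed cycle of this constraint graph. The $s$-differences telescope to zero along the cycle, so the non-negativity requirement reduces to a linear inequality in $z^*$. For a cycle supported only on \eqref{MTZ1}-edges, of length $k$ and touching vertex set $S=\{w_1,\dots,w_k\}$, the requirement collapses to $\sum_{e\in C} z^*_e \leq k(1-1/|V|)$. But the subtour inequality \eqref{SEC} applied to $S$ yields $\sum_{e\subseteq S} z^*_e\leq k-1$, which dominates the preceding bound whenever $k\leq |V|$. Thus no negative cycle can arise purely from \eqref{MTZ1}.

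The main obstacle is absorbing the lifted inequalities \eqref{MTZ4}--\eqref{MTZ5} together with the boundary edges induced by $s_1=1$ and $s_v\in[2,|V|]$, since mixed cycles may combine basic, lifted and boundary arcs. I would treat the lifted arcs by pairing each with its reverse counterpart and invoking $z^*_{vw}+z^*_{wv}\leq 1$, which already follows from \eqref{C1}--\eqref{C2}, and handle cycles that enter or leave through the auxiliary boundary nodes by applying SECs on the subsets of $V$ they touch, in the spirit of the classical comparisons by Padberg--Sung and Desrochers--Laporte. The geometric, neighborhood and polygonal-chain variables remain inert throughout, so the TSP-level argument transfers verbatim to XPPN.
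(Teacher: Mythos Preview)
Your reduction to the routing substructure is exactly the paper's first move: both formulations coincide on everything except how connectivity is enforced, so the containment is a statement about the $z$ (and $s$) variables alone. From there, however, the paper does not argue anything: it simply invokes \cite{Velednitsky2017} for the fact that the DFJ/SEC polytope sits inside the MTZ polytope for the asymmetric TSP, and declares the theorem proved.

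Your route is genuinely different in that you propose to \emph{reprove} that cited result in situ, via the Bellman--Ford feasibility criterion for the difference system $s_v-s_w\le c_{vw}(z^*)$ and a cycle-weight analysis. This is essentially the mechanism behind Velednitsky's short proof, so the two approaches are equivalent in content; yours is self-contained, the paper's is a one-line citation. One point worth noting: the paper's citation covers only \eqref{MTZ1}--\eqref{MTZ3}, whereas the stated \eqref{MTZ-XPPN} also carries the lifted constraints \eqref{MTZ4}--\eqref{MTZ5}, which tighten the MTZ polytope and therefore require additional justification for the containment. You correctly flag this as the main obstacle and sketch the standard Desrochers--Laporte style handling (pairing lifted arcs with their reverses and using $z_{vw}+z_{wv}\le 1$), which the paper's proof silently omits. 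In that sense your outline is, if anything, more complete than the paper's own argument, at the cost of being a sketch rather than a citation.
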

\begin{proof}
Notice that the only difference between these polytopes is the constraint that ensures the elimination of subtours. Therefore, it is enough to see that the \eqref{SEC} constraints are stronger than those given in \eqref{MTZ1}-\eqref{MTZ3}, that is proved in \cite{Velednitsky2017}.
\end{proof}

\section{A heuristic algorithm for XPPN\label{sec:heur}}
In this section we present a heuristic algorithm for solving XPPN. This algorithm has two different applications. On the one hand, it provides good quality feasible solutions for XPPN that become a promising alternative to exact methods whenever the size of the problems is large. On the other hand, it also helps in solving exactly XPPN by feeding the exact formulations with a good initial solution which in turns speeds up the branch and bound search. The considered algorithm is composed by two phases: the Clustering Phase and the Variable Neighborhood Search (VNS) Phase. The so called clustering phase determines some points in each dimensional element (polygonal chain or neighborhood) and then the VNS phase finds a heuristic tour on the complete graph spanned by the previously obtained points.

\bigskip
\noindent \textbf{The Clustering Phase}
\medskip

\noindent The first phase of the heuristic algorithm is based on solving a relatively easy single facility location problem: the Weber or median Problem. The solution of this problem looks for a prototype point (a representative) of the dimensional elements in the problem (neighborhoods and polygonal chains) that minimizes the sum of the Euclidean distance from all those elements, i.e., we want to find a point $C$ and some points $x_v$ in the element associated with the vertex $v\in V$ such that:

\begin{mini}
 {}{\sum_{v\in V}\|x_v - C\|}{}{}
\addConstraint{\eqref{C-C}, \eqref{P-C}, \eqref{alpha-C}}{}{}
\end{mini}

The idea of this approach is to find some points that are likely to be close to the true chosen  points in each element in the final optimal route. Figure \ref{fig:heur-1} shows an example that combines three neighborhoods and seven polygonal chains. Red points represent the points of each set and the green point is the proposed 1-median obtained after solving the corresponding Weber problem described above.

\begin{figure}[ht!]
\begin{center}
 \includegraphics[width=0.7\linewidth]{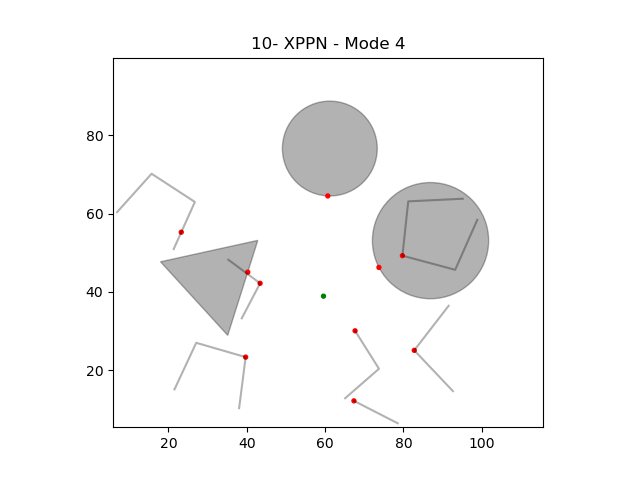}
\end{center}
\caption{Illustration of the first phase of the heuristic algorithm\label{fig:heur-1}}
\end{figure}
\bigskip

\noindent \textbf{The Variable Neighborhood Search Phase}
\medskip

\noindent
Once the points of each set have been chosen,  the idea is to find the minimal cost route that joins these points. To obtain this route, we have used the well-known Variable Neighborhood Search metaheuristic developed in \cite{Mladenovic1997}. The code has been taken from \cite{Pereira2018}.

Using the example depicted in Figure \ref{fig:heur-1}, we generate a tour considering this VNS approach with a maximum number of 25 attempts, a neighborhood of size 5 and 10 iterations. The final result is shown in Figure \ref{fig:heur-2}.

\begin{figure}[ht!]
\begin{center}
 \includegraphics[width=0.7\linewidth]{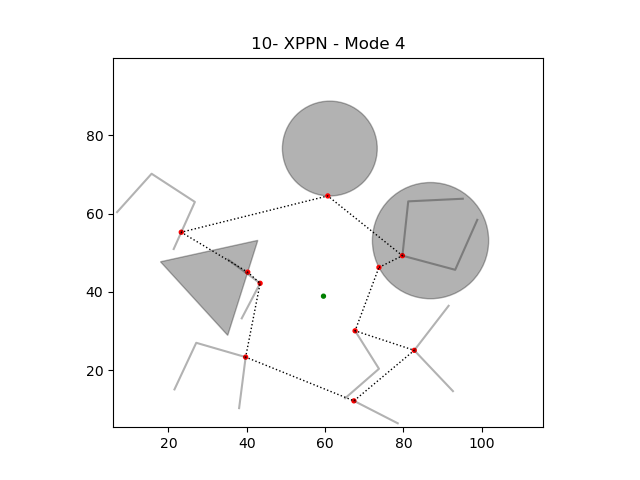}
\end{center}
\caption{Application of the VNS phase to the example of Figure \ref{fig:heur-1}\label{fig:heur-2}}
\end{figure}

Finally, in order to build a feasible solution for XPPN we take into account the position of the points (represented by $x_v^1$ and $x_v^2$) and the order in which they have to be visited in the  tour obtained by VNS phase of our heuristic (represented by $z_{vw}$). Once the solution is built, it can also be taken as an initial solution for any of the exact formulations presented above.

\section{Strengthening the formulation of XPPN} \label{section:bound}
\subsection{Pre-processing}
In this section we explore the geometry of the neighborhoods that appear in the problem to fix a priori some variables and to increase the efficiency of the model. 

Firstly, we consider two special cases that relate the position of the entry and exit points of each neighborhood with the coefficient $f_v$ of the objective function.

\begin{remark}
If the problem verifies that $f _v= 0$ for all $v\in V _{\mathcal C}$, then the entry and exit points $x_v^1$ and $x_v^2$ selected in each neighborhood are the same that the ones obtained by minimizing the distance between the neighborhoods. (No sé cómo escribirlo, lo hablamos a ver cómo se podría poner)

\end{remark}

\begin{remark}
If the problem verifies that $f_v\geq 1$ for all $v\in V _{\mathcal C}$, then, in the optimum, $x_v^1=x_v^2$.
\end{remark}

\begin{proof}
Let $x_u^2$ and $x_w^1$, the exit (resp. entry) point of the neighborhood $u\in V_{\mathcal C}$ (resp. $w\in V_{\mathcal C}$) and suppose that in the optimum $x_v^1 \neq x_v^2$, i.e., $d(x_v^1, x_v^2)>0$. Let $x'_v=\frac{x_v^1+x_{v}^2}{2}$ the middle point of these points. If we compare the length of the new built path $p'=[x_u^2,x_v', x_w^1]$ with the path $p=[x_u^2, x_v^1, x_v^2, x_w^1]$, we obtain by using the triangular inequality that
\begin{align*}
length(p') &= d(x_u^2, x_v') + d(x_v', x_w^1) \\
           &\leq d(x_u^2, x_v^1) + f_vd(x_v^1, x_v')+f_vd(x_v',x_v^2)+d(x_v^2, x_w^1)\\ 
           &= d(x_u^2, x_v^1) + f_vd(x_v^1, x_v^2)+d(x_v^2, x_w^1)\\
           &=length(p).
\end{align*}

This fact contradicts that $p'$ is the optimum.
\end{proof}

From now on, we assume in the rest of this section that $f_v\geq 1$ for all $v\in V$. The following outcome restricts the domain where the selected point are located.

\begin{proposition}\label{prop:boundary}
Any point selected in an optimal solution of the XPPN must be place in the boundary of the neighborhoods.
\end{proposition}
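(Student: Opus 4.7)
The plan is to argue by contradiction. Suppose that in some optimal solution a point $x_v$ associated with a vertex $v\in V_{\mathcal{C}}$ lies in the interior of $\mathcal{C}_v$. By the preceding remark, since $f_v\geq 1$, we may assume $x_v^1=x_v^2=x_v$, so $v$ contributes to the tour through two edges of $E_{\text{out}}$: one from $x_u^2$, the exit point of its predecessor $u$, to $x_v$, and one from $x_v$ to $x_w^1$, the entry point of its successor $w$. The only part of the objective that depends on $x_v$ is therefore $g(x_v):=\|x_u^2-x_v\|+\|x_v-x_w^1\|$, since the discounted in-component term $f_v\|x_v^1-x_v^2\|$ vanishes.

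Now I would exploit the fact that $g$ is convex on $\mathbb{R}^2$, with global minimum $\|x_u^2-x_w^1\|$ attained exactly on the segment $[x_u^2,x_w^1]$. Away from $\{x_u^2,x_w^1\}$, the gradient of $g$ equals $n_1+n_2$, where $n_1$ and $n_2$ are the unit vectors from $x_u^2$ and $x_w^1$ to $x_v$, and this sum vanishes only on the open segment. I split into two cases. If $x_v\notin[x_u^2,x_w^1]$, then $\nabla g(x_v)\neq 0$ and a sufficiently small perturbation of $x_v$ in the direction $-(n_1+n_2)$ strictly decreases $g$. Because $x_v$ is interior to the convex set $\mathcal{C}_v$, such a perturbation remains feasible for \eqref{C-C}, and leaves all other variables and constraints of the formulation unchanged, contradicting optimality. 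If instead $x_v$ lies in the open segment $(x_u^2,x_w^1)$, then this segment crosses $\mathcal{C}_v$; both of its intersections with $\partial\mathcal{C}_v$ yield the same value $\|x_u^2-x_w^1\|$ of $g$, so relocating $x_v$ to either of these boundary crossings produces an equivalent optimal solution in which $x_v$ lies on the boundary, and the conclusion holds.

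I expect the main obstacle to be the rigorous handling of the degenerate collinear case, where one must observe that although strict improvement is impossible, an optimal boundary placement always exists, so the proposition is best read as "there is always an optimal solution whose selected points belong to $\partial\mathcal{C}_v$." A second minor point to verify is that the local perturbation in the first case does not interact with the polygonal-chain constraints \eqref{P-C} and \eqref{alpha-C}, which is immediate because $v\in V_{\mathcal{C}}$ and those constraints concern only vertices in $V_{\mathcal{P}}$. Beyond these subtleties, the argument is essentially the standard fact that the sum of two Euclidean distances to fixed points is minimised along the segment joining them.
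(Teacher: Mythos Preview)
Your argument is correct and rests on the same local idea as the paper's proof: isolate the contribution of the single point $x_v$ through the two incident outer edges, and show that an interior placement can always be replaced by a boundary one without increasing $\|x_u^2-x_v\|+\|x_v-x_w^1\|$. The paper frames this as an induction on the number of neighborhoods (although the inductive hypothesis is never actually used) and, in the non-collinear case, drops the perpendicular from $B$ onto $\overline{AC}$, takes its nearer intersection $B'$ with $\partial\mathcal{C}_v$, and applies the Pythagorean theorem to compare the two triangles. You instead invoke the convexity of $g$ and a gradient-descent perturbation, which is a bit cleaner, dimension-independent, and avoids the artificial induction. Your explicit remark that the collinear case only yields an existential conclusion (``there is an optimal solution on the boundary'') is also a point the paper glosses over; its own aligned case produces equality, not strict improvement. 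One tiny gap: your case split omits the endpoints $x_v\in\{x_u^2,x_w^1\}$, but these are handled by the same segment-relocation argument as the open-segment case, so it is harmless.
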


\begin{proof}
By induction in the number of neighborhoods.
\begin{itemize}
\item $n=2:$ it is known that the minimum distance between two convex sets occurs in the boundary of these sets.
\item $n\Rightarrow n + 1:$ let assume that the problem has $n+1$ sets and there exists a neighborhood $N_k$ whose selected point in the optimum is in $int(N_k)$. Let $T$ be the triangle formed by the point $A$ of the previous neighborhood, the point $B$ in the neighborhood $k$ and the point $C$ of the next neighborhood in the optimum edge sequence.  Let also $\overline{AC}$ the line segment that joins $A$ and $C$. The idea of the proof is to construct a point in the border of $N_k$ whose distance to $A$ and $C$ is lower. We have two cases:
\begin{itemize}
  \item If $A, B, C$ are aligned, we consider the point $B'=\overline{AC}\cap\partial N_k$. Since $B'$ is also aligned with $A$ and $C$, $\overline{AB'} + \overline{B'C}=\overline{AB} + \overline{BC}$. 
  \item If $A, B, C$ are not aligned, we consider the point $B'=r^{\bot}\cap \partial N_k$, where $r^\bot$ is the perpendicular line to the line segment $\overline{AC}$ and $\partial N_k$ is the boundary of $N_k$. Note that this intersection produces two points in $\partial N_k$, we take the closest one to $\overline{AC}$. If we call $T'$ the triangle generated by $A$, $B'$ and $C$, then the height of $T'$ to $\overline{AC}$ is lower than the one of $T$, hence by the Pythagorean Theorem, $\overline{AB'} < \overline{AB}$ and $\overline{B'C} < \overline{BC}$, which is a contradiction.
  \end{itemize}
\end{itemize}
\end{proof}

The special case in which all the neighborhoods are circles, allows us to limit more the location of the points based on the construction given in the Proposition \ref{prop:boundary}.

\begin{corollary}
Any point selected in an optimal solution of the XPPN when all the neighborhoods are circles is placed in some arc of one of the circumferences inside of the convex hull generated by the center of the circles.
\end{corollary}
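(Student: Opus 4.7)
By Proposition \ref{prop:boundary}, every selected point $B_v$ in an optimal XPPN tour already lies on the circumference $\partial N_v$. Since $\partial N_v \cap H$ is an arc of $\partial N_v$, where $H = \operatorname{conv}\{O_1,\ldots,O_{|V|}\}$ denotes the convex hull of the centers, the corollary reduces to proving that every selected point additionally lies in $H$.

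The plan is to argue by contradiction via a one-sided reflection. Suppose some selected point $B_k \in \partial N_k$ lies outside $H$ in an optimal tour. I would first choose a direction $\vec u \in \mathbb R^2$ such that $\vec u^{\top} B_k$ strictly exceeds both $\sup_{x \in H} \vec u^{\top} x$ and $\vec u^{\top} B_v$ for every other selected point $B_v$. Such a $\vec u$ exists by combining the supporting hyperplane theorem (to strictly separate $B_k$ from $H$) with a generic perturbation that breaks ties among the finitely many values $\vec u^{\top} B_v$, or equivalently by taking $B_k$ to be an extreme outside selected point in the direction $\vec u$. Then pick $c \in \mathbb R$ with
\[
    \max\bigl\{\sup_{x\in H}\vec u^{\top}x,\ \max_{v\neq k}\vec u^{\top} B_v\bigr\} \;<\; c \;<\; \vec u^{\top} B_k,
\]
and let $B_k^*$ be the reflection of $B_k$ across the line $L = \{x \in \mathbb R^2 : \vec u^{\top} x = c\}$.

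By construction, every center $O_v$ and every other selected point $B_v$ lies in the closed half-plane $\{\vec u^{\top} x \le c\}$, while $B_k$ is strictly on the opposite side. A standard reflection computation then yields $\|B_k^* - O_k\| \le \|B_k - O_k\| = r_k$, so $B_k^* \in N_k$, together with the strict contractions $\|B_k^* - A\| < \|B_k - A\|$ and $\|B_k^* - C\| < \|B_k - C\|$ for the tour neighbors $A$ and $C$ of $B_k$. Replacing $B_k$ by $B_k^*$ therefore produces a feasible tour of strictly smaller cost; if $B_k^*$ happens to lie in $\operatorname{int} N_k$, Proposition \ref{prop:boundary} is invoked once more to push it out to $\partial N_k$ without increasing the cost. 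This contradicts optimality, so every $B_v$ belongs to $\partial N_v \cap H$, which is an arc of the corresponding circumference.

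The main obstacle I expect is the selection of the direction $\vec u$. A naive choice, such as any supporting line of $H$ that strictly separates $B_k$ from $H$, can fail when a tour neighbor of $B_k$ also lies outside $H$ on the same side of $L$, because the reflection may then lengthen one of the two relevant distances. Forcing $B_k$ to be the extreme selected point in the direction $\vec u$ is the manoeuvre that places every competing point, including the tour neighbors, safely on the $H$-side of the reflection line; verifying that such a $\vec u$ always exists whenever some selected point lies outside $H$ is the only genuinely geometric step in the argument.
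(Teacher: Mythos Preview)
Your reflection argument is sound and takes a genuinely different route from the paper's. The paper argues by induction on the number of circles and, for a violating point $B\in\partial N_k\setminus H$, splits $\partial N_k$ into two arcs by the line through $O_k$ perpendicular to the relevant edge of $H$; depending on which arc contains $B$, it replaces $B$ either by its projection onto $H$ (followed by the construction of Proposition~\ref{prop:boundary}) or by the diametrically opposite point of $B$ on $\partial N_k$, in each case obtaining a triangle with smaller height over $\overline{AC}$. Your approach collapses this case split into a single reflection across a line orthogonal to a well-chosen direction $\vec u$, and you correctly identify that the only real issue is forcing the tour neighbours $A$ and $C$ onto the $H$-side of the reflecting line. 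By taking $B_k$ to be an outermost selected point you achieve this directly; the paper's proof, by contrast, never explicitly uses its inductive hypothesis and leaves the position of $A$ and $C$ to the figures.

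One small technical gap: your claim that a generic perturbation of $\vec u$ breaks all ties among the values $\vec u^{\top}B_v$ fails when two selected points coincide as elements of $\mathbb R^2$, since then $\vec u^{\top}B_k=\vec u^{\top}B_{k'}$ for every direction $\vec u$, and if such a $B_{k'}$ happens to be a tour neighbour of $B_k$ your single-point reflection lengthens the edge $B_{k'}B_k$ from $0$ to something positive. The clean repair is to drop the uniqueness requirement altogether: choose any $\vec u$ strictly separating some outside selected point from $H$, pick $c$ just above $\sup_{x\in H}\vec u^{\top}x$, and reflect \emph{every} selected point lying in the half-plane $\{\vec u^{\top}x>c\}$ simultaneously. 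Each reflected $B_v$ stays in $N_v$ because $O_v\in H$ lies on the opposite side of $L$, edges joining two reflected points keep their length since reflection is an isometry, and every edge crossing $L$ strictly shortens by the computation you sketch. The contradiction then follows without any tie-breaking. The paper's own proof does not address this degeneracy either, so this is a refinement of both arguments rather than a defect peculiar to yours.
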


\begin{proof}
By induction in the number of circles.

\begin{itemize}
\item $n=2:$ the points are located in the line segment that joins the center of the circles.
\item $n\Rightarrow n + 1:$ let assume that the problem has $n+1$ circles and there exists a circle $N_k$ whose selected point in the optimum is not in the convex hull of the center of the neighborhoods. Let $T$ be the triangle formed by the point $A$ of the previous neighborhood, the point $B$ in the neighborhood $k$ and the point $C$ of the next neighborhood in the optimum edge sequence.  Let also $\overline{AC}$  be the line segment that joins $A$ and $C$. The idea of the proof is to construct a point in the border of the convex hull $\mathcal C$ whose distance to $A$ and $C$ is lower. We have two cases depending on the location of $B$ in the neighborhood:
\begin{itemize}
  \item If $A, B, C$ are aligned, we consider the point $B'=\overline{AC}\cap\partial N_k$. Since $B'$ is also aligned with $A$ and $C$, $\overline{AB'} + \overline{B'C}=\overline{AB} + \overline{BC}$. 
  \item If $A, B, C$ are not aligned, we split the circumference on two arcs $A_R$ and $A_B$. These arcs are built by taking the perpendicular line to the edge of the convex hull:
  \begin{itemize}
      \item If $B\in A_R$, we take $B'$ the projection to the convex hull and it produces a triangle $T'$ with lower height to $\overline{AC}$. Then, we can use the Proposition \ref{prop:boundary} to construct a point in the boundary of $N_k$ that lies in the convex hull. (See Figure )
      \item If $B\in A_B$, we construct $B'$ the diametrically opposite point of $B$ in $N_k$ and it also produces a smaller height that contradicts the assumption that $B$ gives the shortest tour.
  \end{itemize}
\end{itemize}
\begin{center}
 \includegraphics[width=0.5\linewidth]{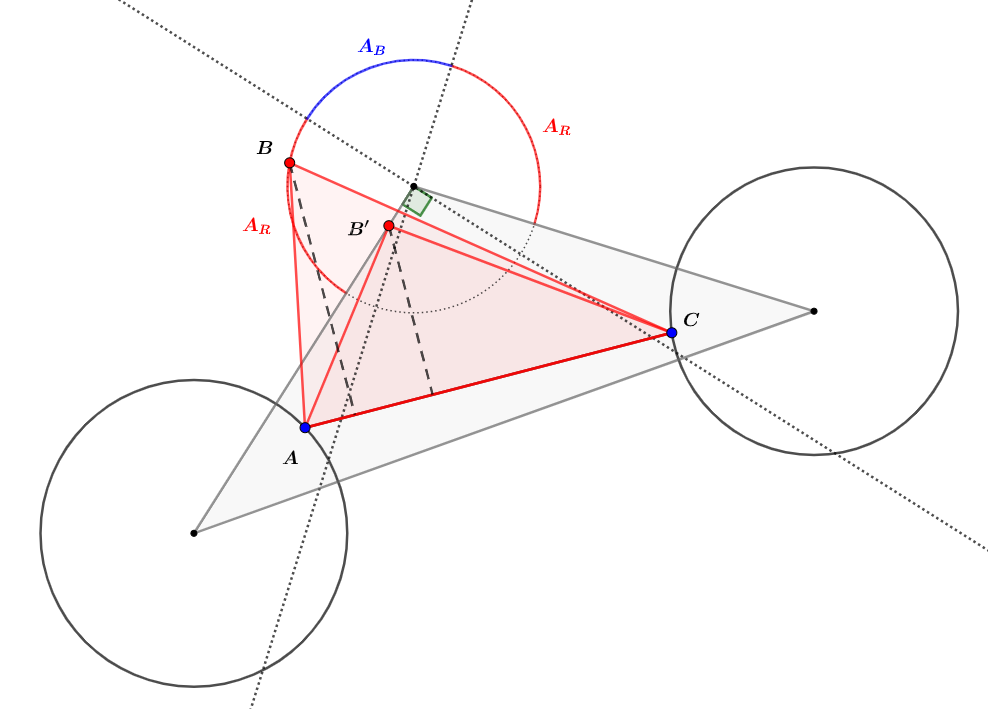}
 \includegraphics[width=0.5\linewidth]{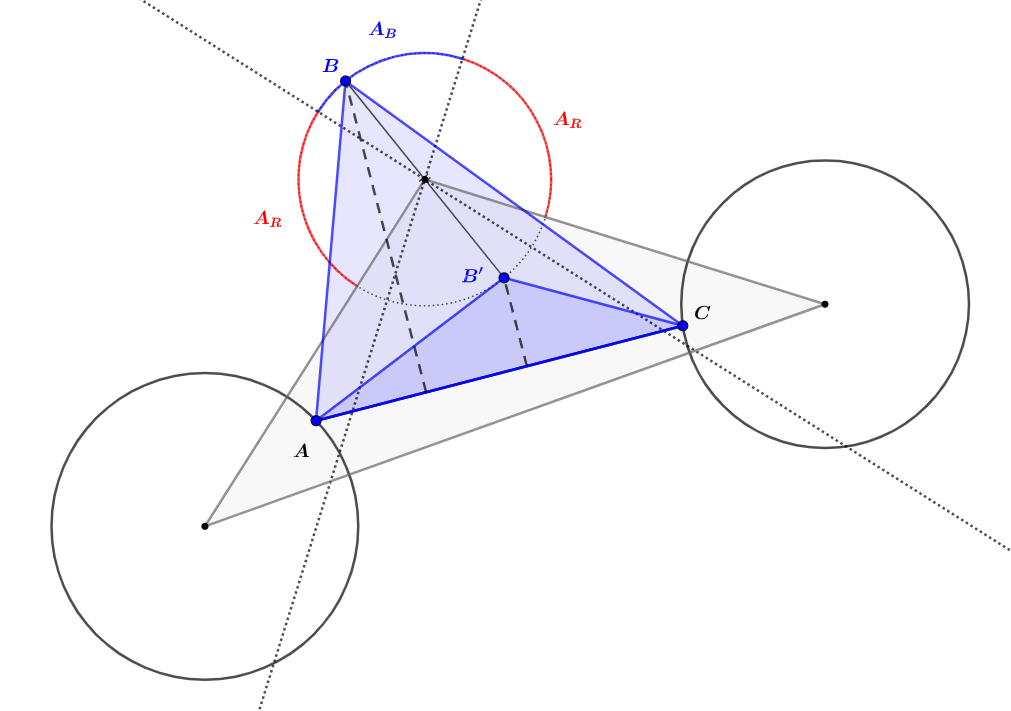}
\end{center}
\end{itemize}
\end{proof}

Now we give a result to eliminate some neighborhoods and simplify the problem without modifying the objective value.

\begin{proposition}
Given two neighborhoods $A$ and $B$, if $B\supset A$, then $B$ can be deleted in the problem.
\end{proposition}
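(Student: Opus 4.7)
The plan is to show that the optimal value of XPPN is unchanged when vertex $B$ is dropped from the instance, which is the operational content of ``$B$ can be deleted''. Denote by $P^*$ the optimum on the original vertex set and by $P^*_{-B}$ the optimum on the vertex set without $B$; I would establish the two inequalities $P^*_{-B}\le P^*$ and $P^*\le P^*_{-B}$ separately, both by an explicit tour modification.

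For $P^*_{-B}\le P^*$ I would take an optimal tour $\mathcal{T}^*$ of the full problem and short-cut the visit to $B$. If $B$ is visited between neighborhoods $u$ and $w$ using entry/exit pair $x_B^1,x_B^2$, two applications of the Euclidean triangle inequality give
\[
d(x_u^2,x_w^1)\;\le\;d(x_u^2,x_B^1)+d(x_B^1,x_B^2)+d(x_B^2,x_w^1)\;\le\;d(x_u^2,x_B^1)+f_B\,d(x_B^1,x_B^2)+d(x_B^2,x_w^1),
\]
where the second step uses the standing assumption $f_v\ge 1$ imposed earlier in the section. Hence deleting $B$ and re-connecting $u$ directly to $w$ yields a feasible tour of the reduced instance whose cost is at most that of $\mathcal{T}^*$.

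For the reverse inequality I would take an optimal tour $\mathcal{T}'$ of the reduced problem and insert $B$ into it at zero extra cost. Let $x_A^1,x_A^2\in A$ be the entry/exit points used for $A$ in $\mathcal{T}'$, and let $w$ be the neighborhood visited immediately after $A$. Since $A\subset B$ we have $x_A^2\in B$, so the assignment $x_B^1=x_B^2=x_A^2$ is feasible, and inserting the vertex $B$ between $A$ and $w$ replaces the edge cost $d(x_A^2,x_w^1)$ by $d(x_A^2,x_B^1)+f_B\,d(x_B^1,x_B^2)+d(x_B^2,x_w^1)=0+0+d(x_A^2,x_w^1)$, i.e.\ by the same value. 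The resulting tour is feasible for the full problem and has cost $P^*_{-B}$, proving $P^*\le P^*_{-B}$ and hence equality.

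The main obstacle I anticipate is feasibility of the insertion when $B$ is a polygonal chain carrying a nontrivial traversal requirement $\alpha_B>0$: collapsing $x_B^1$ and $x_B^2$ to a single point would violate \eqref{alpha-C}. The natural reading of the statement is therefore that $B$ is a convex neighborhood in $V_{\mathcal{C}}$ (where $A\subset B$ is also geometrically substantive), or that $\alpha_B=0$ when $B$ is polygonal; this scope restriction should be made explicit in the statement. A secondary subtlety is that the short-cutting step in the first direction requires the Euclidean distance to satisfy the triangle inequality at the level of the chosen representative points, which is automatic for $\|\cdot\|_2$ but would have to be checked for any time-dependent or perturbed distance used in the more general formulations.
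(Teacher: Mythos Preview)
Your argument is correct and in fact more complete than the paper's own proof. The paper argues only the insertion direction: starting from an optimal tour $z^*$ of the reduced instance, it inserts $B$ immediately after $A$ by setting $x_B=x_A^*$ (a single point, relying on the preceding remark that under $f_v\ge 1$ the entry and exit points coincide) and $z_{AB}=z_{BA_{+1}}=1$, then observes that $d(A,B)=0$ and $d(B,A_{+1})=d^*(A,A_{+1})$ so the objective value is unchanged. Your insertion step is essentially the same construction, just written with separate entry/exit points $x_B^1=x_B^2=x_A^2$.

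What you add is the reverse inequality $P^*_{-B}\le P^*$ via short-cutting and the triangle inequality; the paper leaves this implicit, but without it one formally has only $P^*\le P^*_{-B}$. Your observation that the short-cut step uses $f_B\ge 1$ is exactly the standing assumption of the subsection, so that is fine. Your caveat about polygonal chains with $\alpha_B>0$ is also well placed: the paper's proof tacitly treats $B$ as a convex neighborhood (consistent with the surrounding discussion), and the statement would indeed fail as written for $B\in V_{\mathcal P}$ with a positive traversal requirement.
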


\begin{proof}
Starting from the optimal solution of problem without $B$, we are going to build an optimal solution including $B$ that is essentially the same. Let $z^*$ the optimal tour by deleting the neighborhood $B$ in the problem. By connectivity, there exist two neighborhoods $A_{-1}$ and $A_{+1}$ that are connected with $A$, i.e., such that $z^*_{A_{-1}A}=z^* _{AA_{+1}}= 1$. In addition, let $x^*_A$ be the point chosen to visit the neighborhood $A$. If we include $B\supset A$ in the problem and we fix $x_B=x^*_A$ and $z_{AB}=z_{BA_{+1}}=1$. This solution is also a Hamiltonian path whose objective value is the same because $d(A, B) =0$ and $d(B, A_{+1})=d^*(A, A_{+1})$.
\end{proof}


\subsection{Valid inequalities}
The different models that we have proposed include in one way or another big-M constants. We have defined different big-M constants along this work. In order to strengthen the formulations we provide good upper bounds for those constants. In this section we present some results that adjust them for each kind of set considered in our models.

The first bound we need to adjust is $M_e$ that denotes an upper bound of the distance between the sets joined by the edge $e$. We have three cases that depend of the shape of the sets $A$ and $B$:

\begin{itemize}
 \item If $A$ and $B$ are both ellipsoids, we cannot easily compute the maximum distance between $A$ and $B$, but we can generate an upper bound of this distance by taking diametrically opposite points of minimum radius circles that contain each entire ellipsoid. When both ellipsoids are circles, this bound coincides with the maximum distance.
 \item If $A$ is an ellipsoid and $B$ is a polygon or a polygonal chain, we can set this bound by the maximum of the distances of each vertex of $B$ to the center of $A$ plus the radius of the minimum circle that contains the ellipsoid $A$.
 \item If $A$ and $B$ are both polygons or polygonal chains, this bound can be computed exactly by taking the maximum of the distances between vertices of $A$ and $B$.
\end{itemize}

The second bound to be adjusted is $m_e$. It denotes a lower bound of the distance of the sets joined by the edge $e$. In this case, we can compute this distance exactly by solving a convex program the minimizes the distance of the sets $A$ and $B$.

In the Figures \ref{fig:boundsellipellip}, \ref{fig:boundsellippolgon} and \ref{fig:boundspolgonpolgon} we show the selected maximal (red) and minimal (blue) bounds depending on the shape of the sets:

\begin{figure}[h!]
 \centering
 \includegraphics[width=0.7\linewidth]{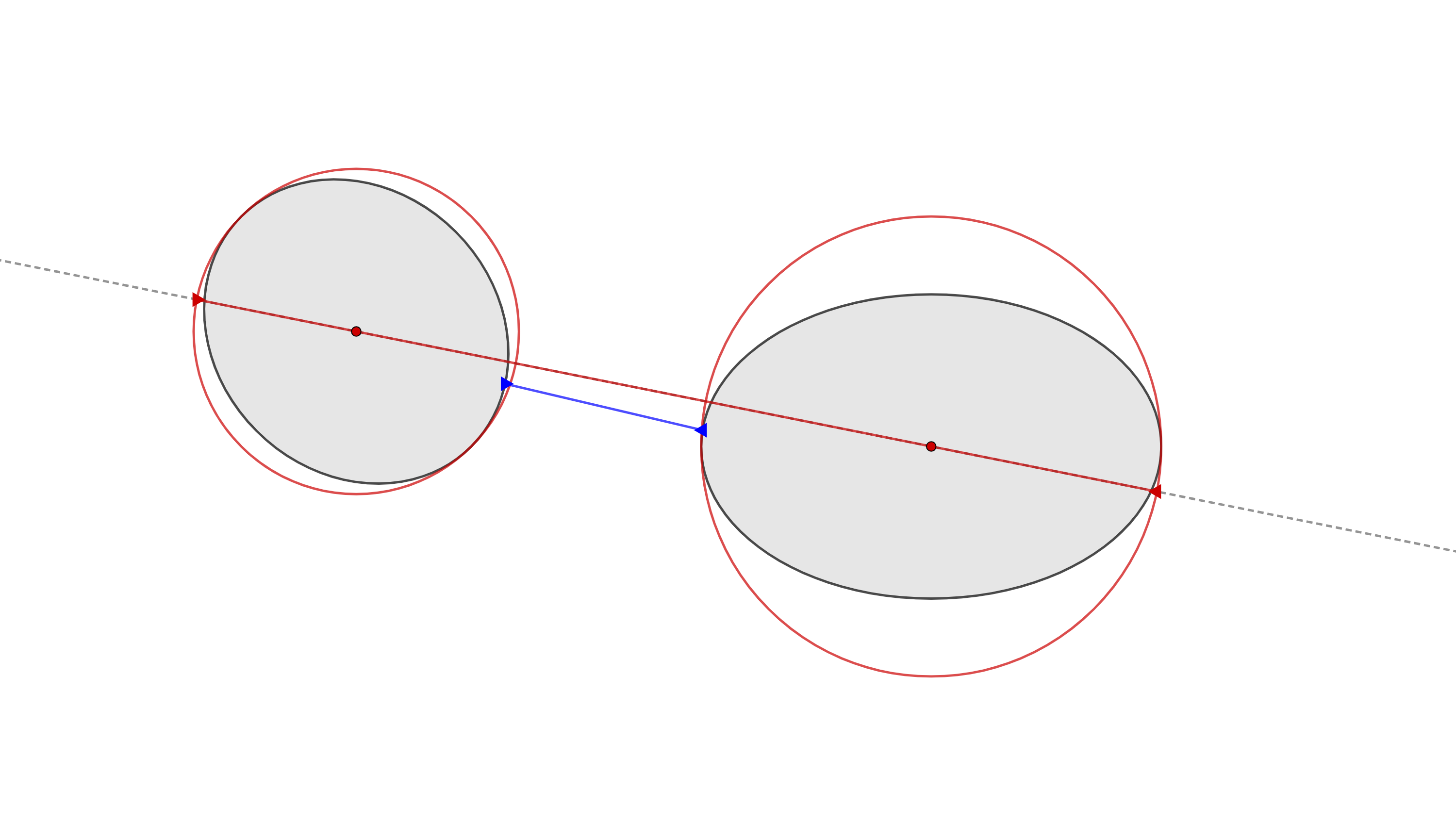}
 \caption{Upper and lower bound when both sets are ellipsoids}
 \label{fig:boundsellipellip}
\end{figure}

\begin{figure}[h!]
 \centering
 \includegraphics[width=0.7\linewidth]{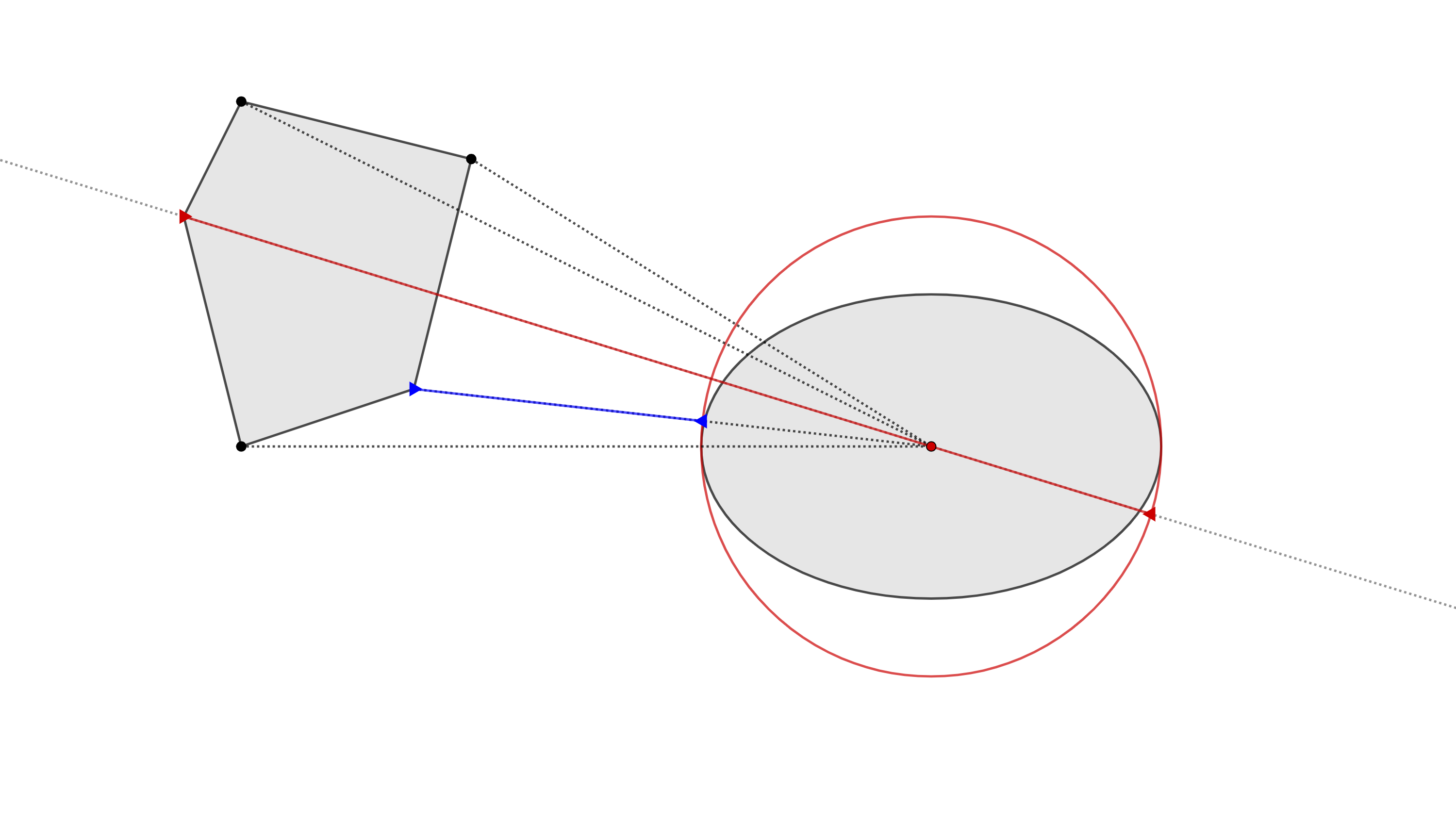}
 \caption{Upper and lower bound when a set is a polygon and the other is an ellipsoid}
 \label{fig:boundsellippolgon}
\end{figure}

\begin{figure}[!ht]
 \centering
 \includegraphics[width=0.7\linewidth]{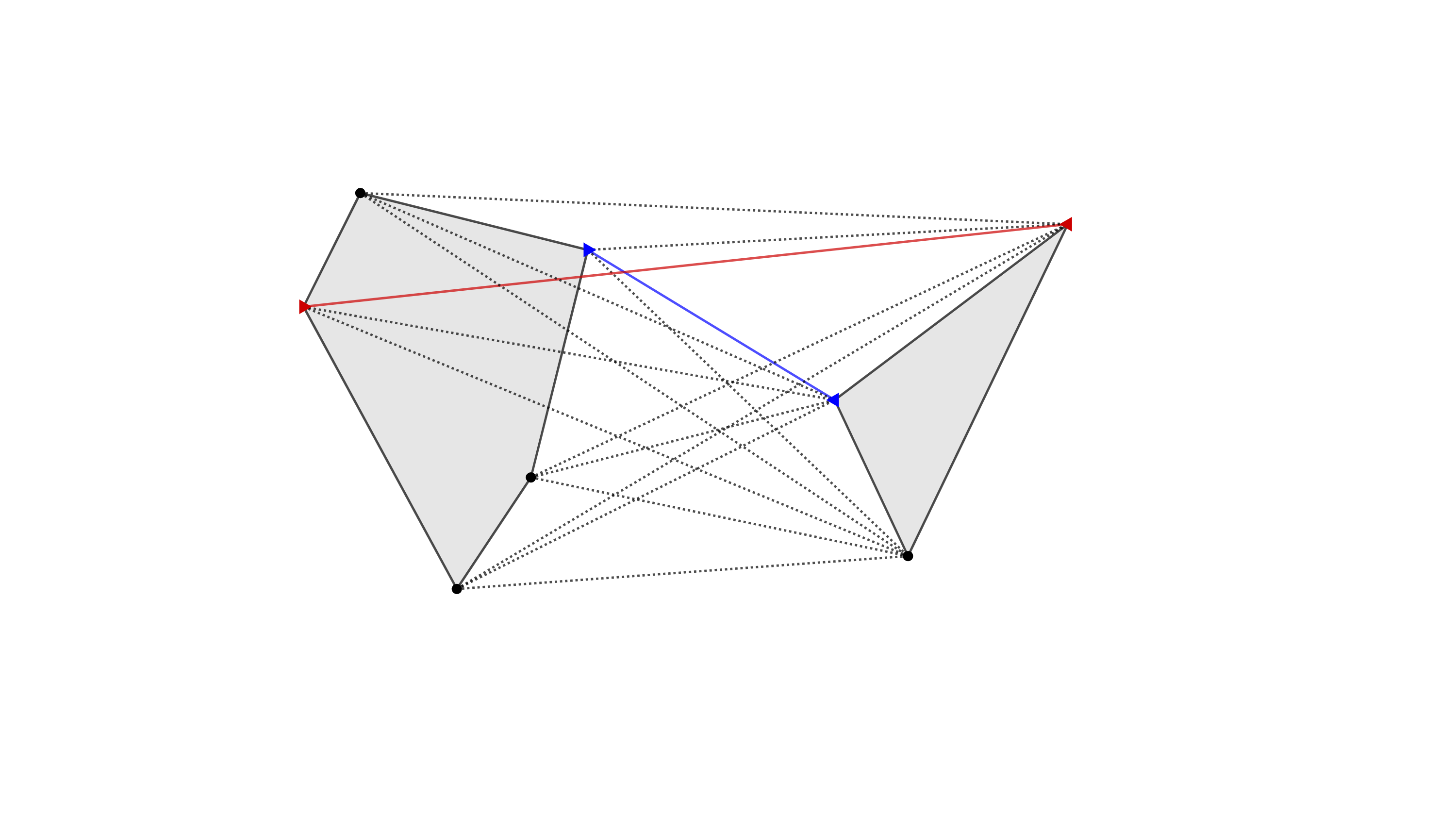}
 \caption{Upper and lower bound when both sets are polygons}
 \label{fig:boundspolgonpolgon}
\end{figure}

In addition, the third bound represent the maximal distance between two points within a given neighborhood.  We can compute this upper bound according to the shape of this set (see Figure \ref{fig:boundsinside}):

\begin{itemize}
 \item If the set is an ellipsoid, we can take diametrically opposite points of the minimum radius circle that contains this ellipsoid.
 \item If the set is a polygon, we can compute the maximum of the distances between each pair of vertices.
 \item If the set is a polygonal chain, this bound equals the length of the polygonal.
\end{itemize}

\begin{figure}[h!]
 \centering
 \includegraphics[width=0.7\linewidth]{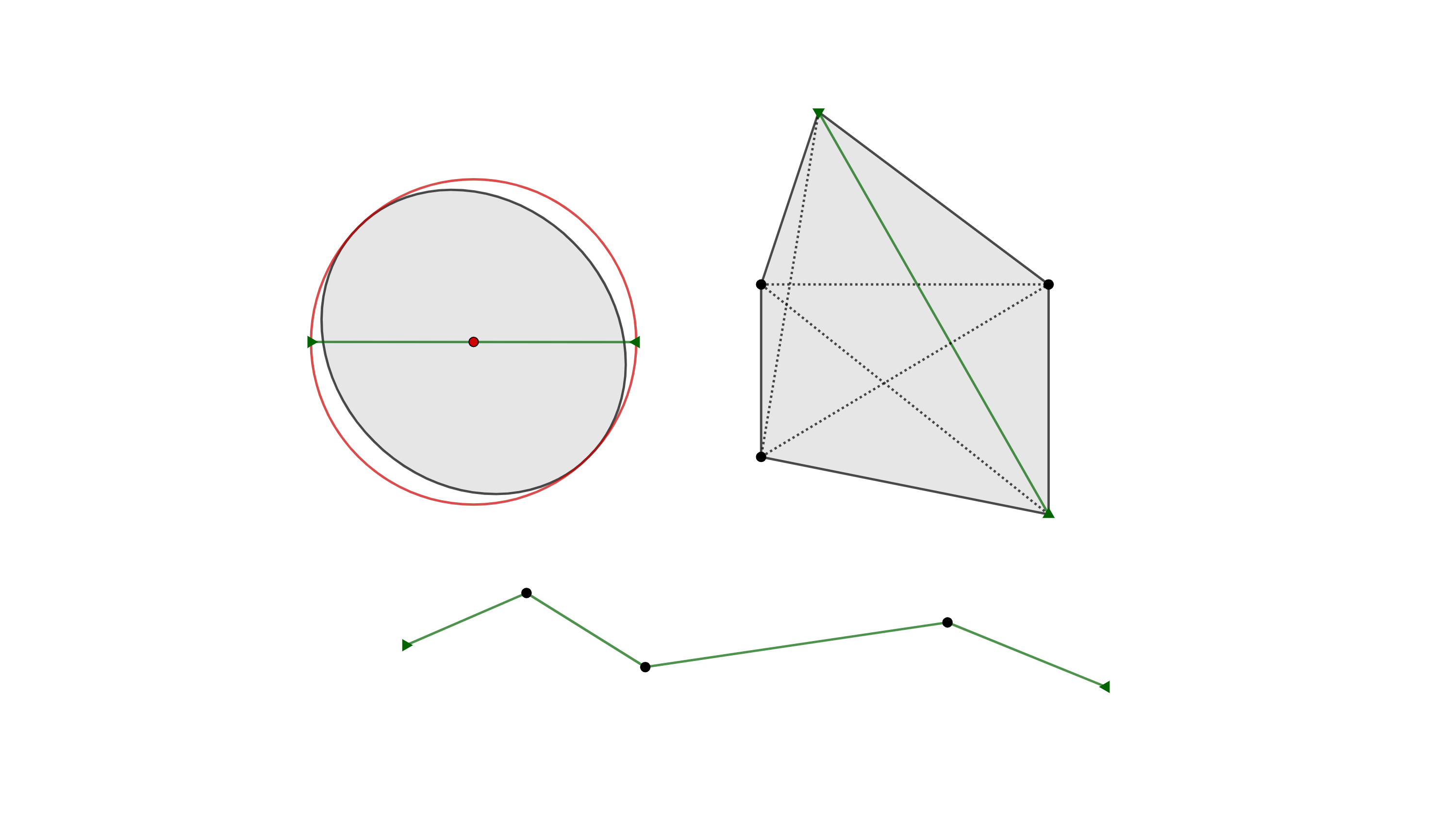}
 \caption{Upper bound on the maximal  distance within a set}
 \label{fig:boundsinside}
\end{figure}


\section{A decomposition algorithm\label{sec:benders}}


In this section we present an alternative row generation approach to solve the XPPN based on a Benders decomposition of the problem. The general method is based on the following observation: If we fix $z\in\mathcal T_G$ in the generic formulation of XPPN, we obtain a continuous SOC problem, which is well-known to be convex. On the other hand, the
objective function that we are considering is bilinear. Hence, we can use a Benders-like decomposition approach (see \cite{Benders1962}) to generate
an iterative algorithm that solves this problem.

For a given $\bar{z} \in \mathcal T_G$, the ``optimal'' vertices and distances of its associated XPPN can be computed by solving the following subproblem:

\begin{mini*}|s|
 {}{d(\bar z)=\sum_{e\in E_\text{out}} d_e \bar z_e + \sum_{v\in V} f_v d_v}{}{}\label{Pdz}\tag{Pd$\bar z$}
 \addConstraint{}{d_e\in\mathcal{D}_e,\,d_v\in\mathcal D_v.}{}{}
\end{mini*}

Note that the number of $d$ variables in \eqref{Pdz} is $2n$, because only distances with nonzero $\bar z_e$ variables need to be calculated. Thus, Benders decomposition is
a good approach for the  XPPN model (see \cite{Blanco2017}). The explicit form of the Benders cuts is the following:
\begin{equation}
  P \geq d(\bar z) + \sum_{e:\bar z_e = 1}M_e(z_e - 1) + \sum_{e:\bar z_e = 0}m_e z_e
\end{equation}
where $P = \sum_{e\in E_\text{out}} p_e + \sum_{v\in V} f_v d_v$ with $p_e\geq 0$ and $M_e$ and $m_e$ are the upper and lower bounds estimated in the above section.

Therefore, the relaxed master problem at the $K$-th iteration of the  row-generation solution algorithm can be stated as:

\begin{align}
P^* = \min \qquad &  P \nonumber\\
    & \hspace*{-1cm}  P \geq  d(\bar z^k) + \sum_{e:\bar z_e^k = 1}M_e(z_e^k - 1) + \sum_{e:\bar z_e^k = 0}m_e z_e^k,\; k=1, \ldots, K,\label{xmatheu000}\\
    & P = \sum_{e\in E_\text{out}} p_e + \sum_{v\in V} f_v d_v \\
 & z \in \mathcal{T}_G.\nonumber
\end{align}

Adding  the above cuts sequentally gives rise to the solution scheme described in Algorithm \ref{alg:benders}:
\medskip

\begin{algorithm}[H]
\SetKwInOut{Input}{Initialization}\SetKwInOut{Output}{output}

 \Input{Let $z^0\in \mathcal{T}_G$ be an initial  solution and $\varepsilon$ a given threshold value.\\
 Set $LB=0$, $UB=+\infty$, $\bar z=z^0$.}

 \While{$|UB-LB|>\varepsilon$}{
 \begin{enumerate}
\item Solve \eqref{Pdz} for $\overline z$ to get $d(\overline{z})$.
\item Add the cut $ P \geq  d(\bar z) + \sum_{e:\bar z_e = 1}M_e(z_e - 1) + \sum_{e:\bar z_e = 0}m_e z_e$ to the current master problem.
\item Obtain the optimal value $\bar{P}$ to the current master problem, and its associated solution $\bar z$.
\item Update $LB=\max\{LB, \bar{P}\}$ and $UB=\min\{UB, \sum_{e\in E} d(\overline{z})_e\overline{z}_e +\sum_{v\in V} f_v d_v\}$
\end{enumerate}
}

 \caption{Decomposition Algorithm for solving XPPN.\label{alg:benders}}
\end{algorithm}

The stopping criterion is that the gap between the upper and lower bound does not exceed the fixed threshold value $\varepsilon$.
\medskip

Theorem 2.4 in \cite{Geoffrion1972} states the finite convergence of the decomposition approach under the following assumptions: convexity and finiteness of the feasible domains, closeness of the ``linking'' constraints between the sets, and convexity of the objective functions. In our case, the finiteness of the number of underlying Hamiltonian tour of $\mathcal T_G$, the convexity of \eqref{Pdz} for any $\overline z \in \mathcal T_G$,  and the linear separability of the problem allows to apply the above result, which assures that Algorithm \ref{alg:benders} terminates in a finite number of steps (for any given $\varepsilon\geq 0$).

To avoid the enumeration of all Hamiltonian tours of $\mathcal T_G$, we have started with a non-empty set of cuts which  give a suitable initial representation of the lower envelope of $P$.

Given that the master problem exhibits a combinatorial nature, we have embedded the cut generation mechanism within a branch-and-cut scheme. 

The computational results obtained by our implementation are included in  table \ref{tab:benders}. This table compares the results of the Final Gap, cpu time and number of cuts added applying the  decomposition algorithm versus those obtained with formulation MTZ. From these results we conclude that the decomposition algorithm performs worse than formulation MTZ even for small size problems. To reinforce our conclusion, we have also included a performance profile of number of solved instances versus time for formulations sSEC, MTZ and the decomposition algorithm (see Figure \ref{fig:instances_solved_benders}). The reader can observe that the number of solved instances within the time limit is approximately one half comparing Benders decomposition with MTZ and sSEC. These results lead us not to include this algorithm in the final computational experience for larger problem sizes presented in the next section.

\begin{table}[H]
\centering
\resizebox{\textwidth}{!}{%
\begin{tabular}{|c|c|c|c|c|c|c|c|}
\hline
\textbf{Size} & \textbf{Radii} & \textbf{Mode} & \textbf{Final Gap (Benders)} & \textbf{Time (Benders)} & \textbf{\#Cuts} & \textbf{Final Gap (MTZ)} & \textbf{Time (MTZ)} \\ \hline
10 & 1 & 1 & 0.0 & 15.72 & 19.0 & 0.0 & 1.93 \\ \hline
10 & 1 & 2 & 0.0 & 23.64 & 55.4 & 0.0 & 0.75 \\ \hline
10 & 1 & 3 & 0.0 & 13.22 & 25.8 & 0.0 & 0.72 \\ \hline
10 & 1 & 4 & 0.0 & 33.96 & 29.8 & 0.0 & 1.52 \\ \hline
10 & 2 & 1 & 76.1 & 6430.08 & 1209.0 & 0.0 & 38.83 \\ \hline
10 & 2 & 2 & 56.14 & 4777.58 & 1009.6 & 0.0 & 14.14 \\ \hline
10 & 2 & 3 & 0.0 & 1766.06 & 380.6 & 0.0 & 2.23 \\ \hline
10 & 2 & 4 & 10.57 & 5993.57 & 804.6 & 0.0 & 2.52 \\ \hline
10 & 3 & 1 & 96.21 & 7208.56 & 1481.2 & 0.0 & 487.94 \\ \hline
10 & 3 & 2 & 92.16 & 7203.99 & 1352.2 & 0.0 & 35.81 \\ \hline
10 & 3 & 3 & 9.29 & 5832.51 & 520.4 & 0.0 & 13.28 \\ \hline
10 & 3 & 4 & 84.41 & 7214.86 & 921.8 & 0.0 & 133.81 \\ \hline
10 & 4 & 1 & 98.79 & 7205.35 & 2283.0 & 19.28 & 3513.38 \\ \hline
10 & 4 & 2 & 95.53 & 7207.51 & 1343.4 & 0.0 & 238.98 \\ \hline
10 & 4 & 3 & 19.55 & 7220.14 & 499.0 & 0.0 & 20.25 \\ \hline
10 & 4 & 4 & 82.69 & 7211.46 & 789.8 & 0.0 & 1142.17 \\ \hline
\end{tabular}%
}
\caption{Computational comparison between MTZ formulation and Benders algorithm for problems with up to 10 neighbors}
\label{tab:benders}
\end{table}

\begin{figure}[h!]
 \centering
 \includegraphics[width=0.75\linewidth]{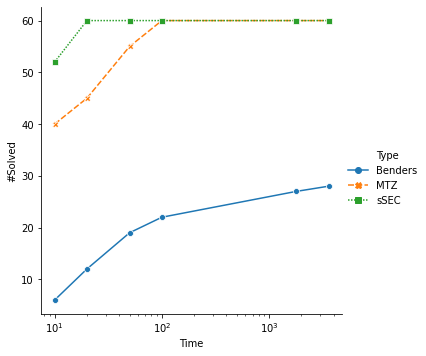}
 \caption{Performance profile: Time vs \#Solved }
 \label{fig:instances_solved_benders}
\end{figure}

\section{Computational Experiments\label{sec:results}}

In this section we have performed a series of experiments to compare the non-time dependent formulations presented in Section \ref{sec:nontimeformulations}. Based on the work of Blanco, Fernández and Puerto (see \cite{Blanco2017} for more details) we have generated five instances of size $m\in[5, 10]$ and we report the average results. We have considered three different types of neighbors to be visited:
\begin{itemize}
    \item Circles of radii $r$.
    \item Regular polygons of radii $r$ with a random number of vertices $|V|\in[3, 10]$.
    \item Polygonal chains parameterized by its breakpoints that are distanced in $r$ and some random percentage $\alpha\in [0, 1]$.
\end{itemize}

In addition, the centers or breakpoints of these elements have been generated uniformly in the square [0, 100]. On the one hand, we have studied four different scenarios to generate the radii to define the elements:
\begin{itemize}
	\item \textbf{Small size Neighborhoods} $(r=1)$: Radii randomly generated in $[0, 5]$.
	\item \textbf{Small-Medium Neighborhoods} $(r=2)$: Radii randomly generated in $[5, 10]$.
	\item \textbf{Medium-Large size Neighborhoods} $(r=3)$: Radii randomly generated in $[10, 15]$.
	\item \textbf{Large size Neighborhoods} $(r=4)$: Radii randomly generated in $[15, 20]$.
\end{itemize}

On the other hand, we have also considered four modes depending on the nature of the neighborhoods:

\begin{itemize}
	\item Mode 1: All neighborhoods are circles.
	\item Mode 2: All neighborhoods are regular polygons.
	\item Mode 3: All neighborhoods are polygonal chains.
	\item Mode 4: Mixture of the three previously  considered neighborhoods.		
\end{itemize}

All the formulations were coded in Python 3.7, and solved using Gurobi 8.1.0. \cite{GurobiOptimization2019} in a Intel(R) Xeon(R) E-2146G CPU @ 3.50 GHz and 64GB of RAM. A time limit of 2 hours was set in all the experiments.

Our preliminary test is devoted to decide whether initializing the solution process with an initial solution helps is solving the problem or not. In this regard, we have performed a comparison between formulation MTZ with and without the initial solution provided by the VNS heuristic. The results are summarized in Table \ref{tab:MTZinitnoinit}. This table reports average results for instances of sizes 5,10 and 15 neighbors with all combinations of radii and modes. It compares the final gap and running times for the formulation with initial solution (\textbf{Final Gap (Init), Opt. Time (Init)}) and without (\textbf{Final Gap (NoInit), Opt. Time (NoInit)}). The results are also depicted in the boxplox diagrams in Figure \ref{fig:final_gap_MTZs}. Both, table and figure, clearly show that loading an initial solution helps in reducing the gap and the cpu time: all the instances up to 10 neighbors are solved to optimality with and without initial solution but for 15 neighbors the final gap in the first case is always better than in the latter (blue boxes are always below orange ones). Based on this results, in the following, we have always solved the instances loading an initial solution.

\begin{table}[H]
\centering
\resizebox{\textwidth}{!}{%
\begin{tabular}{|c|c|c|c|c|c|c|}
\hline
\textbf{Size} & \textbf{Radii} & \textbf{Mode} & \textbf{Final Gap (Init)} & \textbf{Final Gap (NoInit)} & \textbf{Opt. Time (Init)} & \textbf{Opt. Time (NoInit)} \\ \hline
5 & 1 & 1 & 0.0 & 0.0 & 0.61 & 0.45 \\ \hline
5 & 1 & 2 & 0.0 & 0.0 & 0.12 & 0.08 \\ \hline
5 & 1 & 3 & 0.0 & 0.0 & 0.21 & 0.13 \\ \hline
5 & 1 & 4 & 0.0 & 0.0 & 0.25 & 0.27 \\ \hline
5 & 2 & 1 & 0.0 & 0.01 & 0.27 & 0.4 \\ \hline
5 & 2 & 2 & 0.0 & 0.0 & 0.13 & 0.11 \\ \hline
5 & 2 & 3 & 0.0 & 0.0 & 0.17 & 0.14 \\ \hline
5 & 2 & 4 & 0.0 & 0.0 & 0.17 & 0.19 \\ \hline
5 & 3 & 1 & 0.0 & 0.0 & 0.44 & 0.42 \\ \hline
5 & 3 & 2 & 0.0 & 0.01 & 0.16 & 0.12 \\ \hline
5 & 3 & 3 & 0.0 & 0.0 & 0.17 & 0.16 \\ \hline
5 & 3 & 4 & 0.0 & 0.0 & 0.3 & 0.35 \\ \hline
5 & 4 & 1 & 0.0 & 0.01 & 0.34 & 0.4 \\ \hline
5 & 4 & 2 & 0.0 & 0.0 & 0.14 & 0.34 \\ \hline
5 & 4 & 3 & 0.0 & 0.0 & 0.16 & 0.16 \\ \hline
5 & 4 & 4 & 0.0 & 0.0 & 0.28 & 0.3 \\ \hline
10 & 1 & 1 & 0.0 & 0.0 & 1.93 & 4.53 \\ \hline
10 & 1 & 2 & 0.0 & 0.0 & 0.75 & 0.84 \\ \hline
10 & 1 & 3 & 0.0 & 0.0 & 0.72 & 1.77 \\ \hline
10 & 1 & 4 & 0.0 & 0.0 & 1.52 & 2.95 \\ \hline
10 & 2 & 1 & 0.0 & 0.0 & 38.83 & 61.53 \\ \hline
10 & 2 & 2 & 0.0 & 0.0 & 14.14 & 44.93 \\ \hline
10 & 2 & 3 & 0.0 & 0.0 & 2.23 & 4.65 \\ \hline
10 & 2 & 4 & 0.0 & 0.0 & 2.52 & 7.5 \\ \hline
10 & 3 & 1 & 0.0 & 1.09 & 487.94 & 1049.86 \\ \hline
10 & 3 & 2 & 0.0 & 0.0 & 35.81 & 153.37 \\ \hline
10 & 3 & 3 & 0.0 & 0.0 & 13.28 & 29.43 \\ \hline
10 & 3 & 4 & 0.0 & 0.0 & 133.81 & 510.58 \\ \hline
10 & 4 & 1 & 19.28 & 10.0 & 3513.38 & 4134.31 \\ \hline
10 & 4 & 2 & 0.0 & 0.0 & 238.98 & 1253.21 \\ \hline
10 & 4 & 3 & 0.0 & 0.0 & 20.25 & 82.39 \\ \hline
10 & 4 & 4 & 0.0 & 11.75 & 1142.17 & 3490.88 \\ \hline
15 & 1 & 1 & 0.0 & 0.0 & 18.58 & 196.71 \\ \hline
15 & 1 & 2 & 0.0 & 0.0 & 3.38 & 23.37 \\ \hline
15 & 1 & 3 & 0.0 & 0.0 & 314.57 & 44.56 \\ \hline
15 & 1 & 4 & 0.0 & 0.0 & 10.94 & 90.1 \\ \hline
15 & 2 & 1 & 6.69 & 23.17 & 3135.29 & 7200.72 \\ \hline
15 & 2 & 2 & 0.0 & 14.06 & 2460.78 & 7200.49 \\ \hline
15 & 2 & 3 & 0.0 & 0.0 & 14.58 & 49.51 \\ \hline
15 & 2 & 4 & 0.0 & 5.88 & 1052.16 & 4660.88 \\ \hline
15 & 3 & 1 & 46.33 & 59.74 & 5760.56 & 7200.28 \\ \hline
15 & 3 & 2 & 20.79 & 31.2 & 5760.84 & 7200.8 \\ \hline
15 & 3 & 3 & 0.0 & 0.0 & 322.77 & 599.25 \\ \hline
15 & 3 & 4 & 14.07 & 19.17 & 5865.82 & 6896.78 \\ \hline
15 & 4 & 1 & 100.0 & 100.0 & 7200.47 & 7200.98 \\ \hline
15 & 4 & 2 & 20.2 & 36.9 & 4421.25 & 7200.51 \\ \hline
15 & 4 & 3 & 0.19 & 0.72 & 2195.2 & 3566.82 \\ \hline
15 & 4 & 4 & 21.6 & 27.71 & 7200.42 & 7200.5 \\ \hline
\end{tabular}%
}
\caption{Computational comparison between MTZ formulation with and without initial solution}
\label{tab:MTZinitnoinit}
\end{table}

\begin{figure}[h!]
 \centering
 \includegraphics[width=1\linewidth]{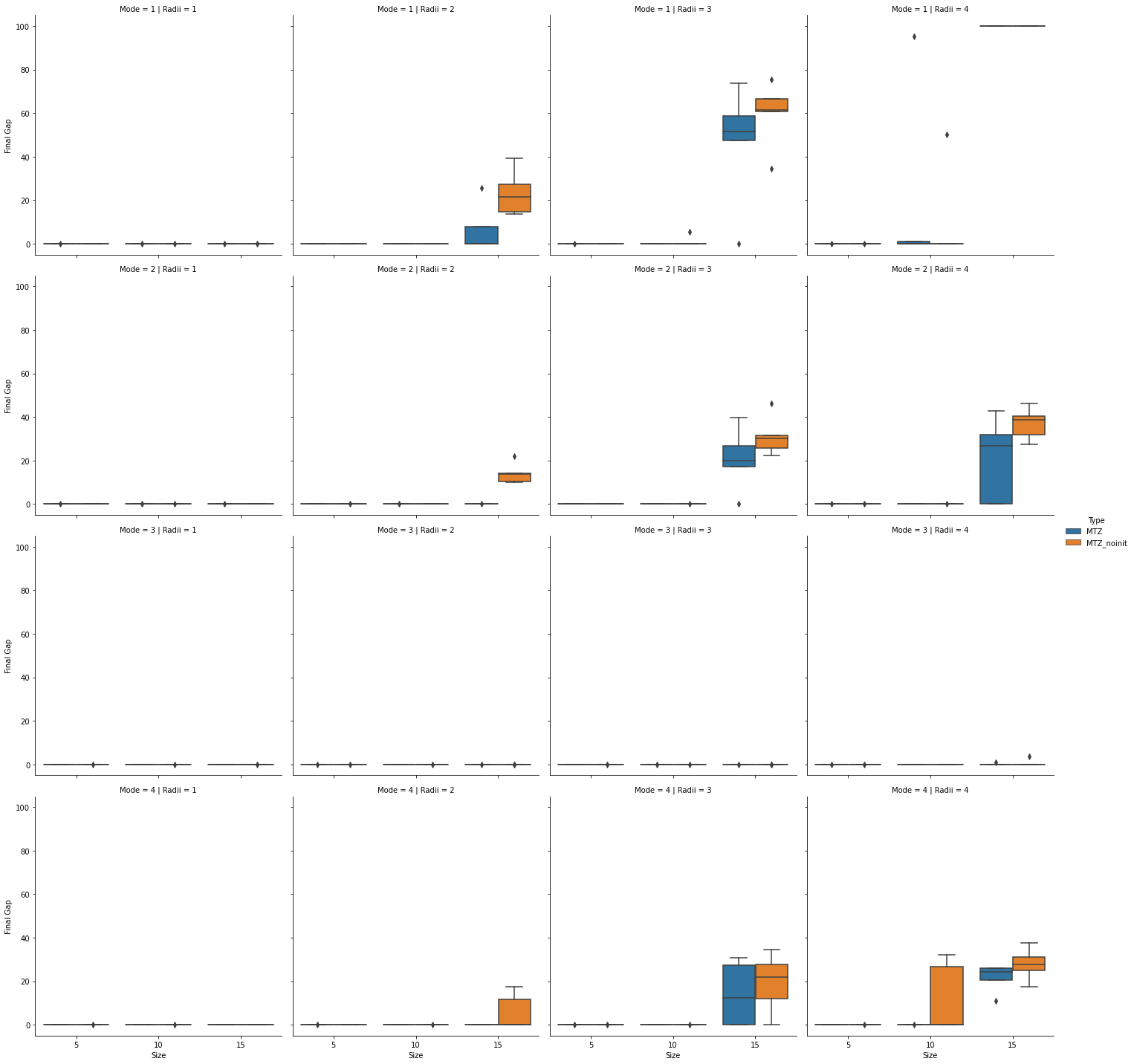}
 \caption{Comparison of the final gap between MTZ formulation with and without initial solution after 7200 seconds}
 \label{fig:final_gap_MTZs}
\end{figure}

The remaining information of our computational experiments can be found in tables \ref{table:SEC}, \ref{table:sSEC} and \ref{table:MTZ}. The first one reports our results for formulation SEC, the second one for sSEC (symmetric version of SEC) and the third one for MTZ. Information in all the three tables is organized in the same way. Each row shows averages of five instances for different combinations of factors (\textbf{Size}, \textbf{Radii} and \textbf{Mode}) each one with four different levels. Our tables have 9 columns. The first three (Size, Radii and Mode) describe the parameters of the problem. Then, we report the final gap (\textbf{\% Final Gap}), time required by the exact method (\textbf{Exact Time}), time required by the heuristic (\textbf{Heur. Time}) and the \% improvement of the gap with respect to the initial solution (\textbf{\% Improved Gap}).

\thispagestyle{empty}
\begin{table}[H]
\centering
\tiny
\resizebox{\textwidth}{!}{%
\begin{tabular}{|c|c|c|c|c|c|c|}
\hline
\textbf{Size} & \textbf{Radii} & \textbf{Mode} & \textbf{Final Gap} & \textbf{Exact Time} & \textbf{Heur. Time} & \textbf{\% Improved Gap} \\ \hline
5 & 1 & 1 & 0.0 & 0.11 & 2.29 & 0.83 \\ \hline
5 & 1 & 2 & 0.0 & 0.06 & 2.2 & 0.57 \\ \hline
5 & 1 & 3 & 0.0 & 0.14 & 3.82 & 0.37 \\ \hline
5 & 1 & 4 & 0.0 & 0.12 & 2.8 & 0.92 \\ \hline
5 & 2 & 1 & 0.0 & 0.09 & 2.35 & 2.51 \\ \hline
5 & 2 & 2 & 0.0 & 0.06 & 2.37 & 2.25 \\ \hline
5 & 2 & 3 & 0.0 & 0.15 & 3.51 & 1.37 \\ \hline
5 & 2 & 4 & 0.0 & 0.09 & 2.68 & 2.53 \\ \hline
5 & 3 & 1 & 0.0 & 0.11 & 2.26 & 3.79 \\ \hline
5 & 3 & 2 & 0.0 & 0.08 & 2.34 & 3.85 \\ \hline
5 & 3 & 3 & 0.0 & 0.16 & 3.72 & 2.05 \\ \hline
5 & 3 & 4 & 0.0 & 0.15 & 2.82 & 2.29 \\ \hline
5 & 4 & 1 & 0.0 & 0.13 & 2.31 & 4.42 \\ \hline
5 & 4 & 2 & 0.0 & 0.26 & 2.18 & 5.18 \\ \hline
5 & 4 & 3 & 0.0 & 0.16 & 3.48 & 3.69 \\ \hline
5 & 4 & 4 & 0.0 & 0.14 & 2.89 & 8.05 \\ \hline
10 & 1 & 1 & 0.0 & 0.95 & 4.31 & 3.25 \\ \hline
10 & 1 & 2 & 0.0 & 0.47 & 4.37 & 2.56 \\ \hline
10 & 1 & 3 & 0.0 & 1.72 & 7.72 & 1.28 \\ \hline
10 & 1 & 4 & 0.0 & 4.81 & 5.6 & 1.53 \\ \hline
10 & 2 & 1 & 0.0 & 21.74 & 4.73 & 6.48 \\ \hline
10 & 2 & 2 & 0.0 & 8.45 & 4.36 & 6.37 \\ \hline
10 & 2 & 3 & 3.23 & 2949.61 & 9.25 & 2.21 \\ \hline
10 & 2 & 4 & 1.38 & 2188.88 & 6.14 & 3.22 \\ \hline
10 & 3 & 1 & 0.0 & 522.23 & 5.08 & 10.0 \\ \hline
10 & 3 & 2 & 0.0 & 84.64 & 4.9 & 9.26 \\ \hline
10 & 3 & 3 & 3.47 & 4324.06 & 10.54 & 1.69 \\ \hline
10 & 3 & 4 & 0.0 & 404.51 & 5.29 & 7.48 \\ \hline
10 & 4 & 1 & 5.32 & 2484.47 & 5.17 & 7.76 \\ \hline
10 & 4 & 2 & 0.0 & 539.03 & 4.85 & 9.21 \\ \hline
10 & 4 & 3 & 3.57 & 3656.07 & 10.11 & 5.47 \\ \hline
10 & 4 & 4 & 16.69 & 6548.93 & 6.18 & 7.86 \\ \hline
15 & 1 & 1 & 0.0 & 14.12 & 5.63 & 2.74 \\ \hline
15 & 1 & 2 & 0.0 & 4.63 & 5.58 & 4.59 \\ \hline
15 & 1 & 3 & 12.12 & 7200.55 & 12.9 & 0.6 \\ \hline
15 & 1 & 4 & 0.46 & 1597.94 & 7.44 & 4.2 \\ \hline
15 & 2 & 1 & 29.42 & 7200.43 & 5.7 & 11.07 \\ \hline
15 & 2 & 2 & 17.89 & 6178.28 & 5.6 & 8.74 \\ \hline
15 & 2 & 3 & 13.91 & 7200.95 & 12.55 & 0.1 \\ \hline
15 & 2 & 4 & 23.44 & 7200.6 & 7.25 & 3.11 \\ \hline
15 & 3 & 1 & 70.59 & 7200.52 & 5.77 & 12.59 \\ \hline
15 & 3 & 2 & 35.67 & 7200.65 & 5.78 & 12.85 \\ \hline
15 & 3 & 3 & 9.7 & 5828.43 & 12.44 & 2.16 \\ \hline
15 & 3 & 4 & 45.94 & 7200.79 & 9.95 & 0.49 \\ \hline
15 & 4 & 1 & 0.0 & 10.68 & 5.65 & 11.74 \\ \hline
15 & 4 & 2 & 43.12 & 7200.7 & 5.67 & 7.02 \\ \hline
15 & 4 & 3 & 7.6 & 5789.95 & 12.67 & 3.58 \\ \hline
15 & 4 & 4 & 41.1 & 7200.6 & 8.48 & 6.57 \\ \hline
20 & 1 & 1 & 2.58 & 3189.58 & 6.9 & 2.72 \\ \hline
20 & 1 & 2 & 1.78 & 2894.34 & 6.47 & 4.59 \\ \hline
20 & 1 & 3 & 10.17 & 5797.93 & 13.57 & 1.78 \\ \hline
20 & 1 & 4 & 11.01 & 6434.25 & 11.34 & 1.47 \\ \hline
20 & 2 & 1 & 63.7 & 7200.95 & 6.41 & 10.96 \\ \hline
20 & 2 & 2 & 39.3 & 7201.34 & 6.77 & 10.83 \\ \hline
20 & 2 & 3 & 11.82 & 6050.34 & 14.24 & 4.17 \\ \hline
20 & 2 & 4 & 37.99 & 7200.84 & 10.26 & 2.74 \\ \hline
20 & 3 & 1 & 95.47 & 7200.71 & 6.7 & 15.29 \\ \hline
20 & 3 & 2 & 55.89 & 7201.07 & 6.62 & 16.12 \\ \hline
20 & 3 & 3 & 17.75 & 7201.0 & 14.43 & 2.0 \\ \hline
20 & 3 & 4 & 45.88 & 7200.79 & 11.44 & 11.6 \\ \hline
20 & 4 & 1 & 0.0 & 855.97 & 6.72 & 10.99 \\ \hline
20 & 4 & 2 & 60.12 & 7201.0 & 6.43 & 1.85 \\ \hline
20 & 4 & 3 & 10.06 & 7201.0 & 14.34 & 4.05 \\ \hline
\end{tabular}%
}
\caption{Asymmetric SEC results with initial solution}
\label{table:SEC}
\end{table}

\begin{table}[H]
\centering
\tiny
\resizebox{\textwidth}{!}{%
\begin{tabular}{|c|c|c|c|c|c|c|}
\hline
\textbf{Size} & \textbf{Radii} & \textbf{Mode} & \textbf{Final Gap} & \textbf{Exact Time} & \textbf{Heur. Time} & \textbf{\% Improved Gap} \\ \hline
5 & 1 & 1 & 0.0 & 0.08 & 2.38 & 0.0 \\ \hline
5 & 1 & 2 & 0.0 & 0.04 & 2.29 & 0.0 \\ \hline
5 & 1 & 3 & 0.0 & 0.08 & 3.74 & 0.51 \\ \hline
5 & 1 & 4 & 0.0 & 0.05 & 2.88 & 0.09 \\ \hline
5 & 2 & 1 & 0.0 & 0.06 & 2.35 & 0.01 \\ \hline
5 & 2 & 2 & 0.0 & 0.06 & 2.38 & 0.0 \\ \hline
5 & 2 & 3 & 0.0 & 0.08 & 3.59 & 1.5 \\ \hline
5 & 2 & 4 & 0.0 & 0.06 & 2.72 & 1.26 \\ \hline
5 & 3 & 1 & 0.0 & 0.05 & 2.28 & 0.01 \\ \hline
5 & 3 & 2 & 0.0 & 0.05 & 2.36 & 2.47 \\ \hline
5 & 3 & 3 & 0.0 & 0.09 & 3.67 & 0.81 \\ \hline
5 & 3 & 4 & 0.0 & 0.07 & 2.83 & 3.36 \\ \hline
5 & 4 & 1 & 0.0 & 0.06 & 2.32 & 0.01 \\ \hline
5 & 4 & 2 & 0.0 & 0.05 & 2.31 & 0.13 \\ \hline
5 & 4 & 3 & 0.0 & 0.07 & 3.62 & 5.95 \\ \hline
5 & 4 & 4 & 0.0 & 0.08 & 2.95 & 1.9 \\ \hline
10 & 1 & 1 & 0.0 & 0.29 & 4.52 & 0.01 \\ \hline
10 & 1 & 2 & 0.0 & 0.14 & 4.66 & 0.01 \\ \hline
10 & 1 & 3 & 0.0 & 0.2 & 7.8 & 0.0 \\ \hline
10 & 1 & 4 & 0.0 & 0.23 & 5.51 & 0.04 \\ \hline
10 & 2 & 1 & 0.0 & 3.72 & 5.41 & 0.05 \\ \hline
10 & 2 & 2 & 0.0 & 1.88 & 5.69 & 2.28 \\ \hline
10 & 2 & 3 & 0.0 & 1.34 & 9.34 & 0.0 \\ \hline
10 & 2 & 4 & 0.0 & 1.36 & 9.68 & 1.17 \\ \hline
10 & 3 & 1 & 0.0 & 45.54 & 5.27 & 0.15 \\ \hline
10 & 3 & 2 & 0.0 & 9.37 & 4.9 & 0.53 \\ \hline
10 & 3 & 3 & 0.0 & 490.19 & 10.53 & 0.67 \\ \hline
10 & 3 & 4 & 0.0 & 7.78 & 5.33 & 4.51 \\ \hline
10 & 4 & 1 & 0.0 & 520.74 & 5.22 & 0.26 \\ \hline
10 & 4 & 2 & 0.0 & 36.72 & 4.99 & 8.1 \\ \hline
10 & 4 & 3 & 0.54 & 1474.78 & 11.41 & 0.0 \\ \hline
10 & 4 & 4 & 0.0 & 1461.93 & 6.84 & 2.91 \\ \hline
15 & 1 & 1 & 0.0 & 2.1 & 6.83 & 0.0 \\ \hline
15 & 1 & 2 & 0.0 & 0.86 & 6.46 & 1.11 \\ \hline
15 & 1 & 3 & 3.14 & 2881.8 & 13.26 & 0.0 \\ \hline
15 & 1 & 4 & 0.0 & 1.17 & 7.92 & 0.0 \\ \hline
15 & 2 & 1 & 12.93 & 5840.0 & 7.19 & 0.0 \\ \hline
15 & 2 & 2 & 8.9 & 4560.43 & 7.0 & 0.75 \\ \hline
15 & 2 & 3 & 11.18 & 5761.15 & 15.14 & 0.11 \\ \hline
15 & 2 & 4 & 8.3 & 4642.84 & 8.36 & 0.0 \\ \hline
15 & 3 & 1 & 64.34 & 7200.42 & 7.39 & 0.0 \\ \hline
15 & 3 & 2 & 28.89 & 7200.44 & 7.45 & 0.0 \\ \hline
15 & 3 & 3 & 5.59 & 5765.79 & 16.7 & 0.42 \\ \hline
15 & 3 & 4 & 24.3 & 7200.44 & 10.87 & 2.94 \\ \hline
15 & 4 & 1 & 0.0 & 271.8 & 7.55 & 0.0 \\ \hline
15 & 4 & 2 & 35.34 & 7200.52 & 7.41 & 3.21 \\ \hline
15 & 4 & 3 & 12.84 & 7200.49 & 248.17 & 0.96 \\ \hline
15 & 4 & 4 & 35.59 & 7200.53 & 10.82 & 0.8 \\ \hline
20 & 1 & 1 & 0.0 & 175.65 & 11.47 & 0.81 \\ \hline
20 & 1 & 2 & 0.95 & 1632.15 & 10.98 & 0.03 \\ \hline
20 & 1 & 3 & 0.0 & 466.74 & 18.07 & 1.28 \\ \hline
20 & 1 & 4 & 8.59 & 2887.48 & 16.38 & 1.36 \\ \hline
20 & 2 & 1 & 43.77 & 7200.49 & 11.95 & 0.0 \\ \hline
20 & 2 & 2 & 26.72 & 7200.65 & 11.51 & 0.0 \\ \hline
20 & 2 & 3 & 4.65 & 4354.1 & 27.26 & 0.0 \\ \hline
20 & 2 & 4 & 26.11 & 7200.5 & 16.42 & 0.37 \\ \hline
20 & 3 & 1 & 81.51 & 7200.5 & 12.52 & 0.0 \\ \hline
20 & 3 & 2 & 47.27 & 7200.95 & 12.13 & 0.0 \\ \hline
20 & 3 & 3 & 16.84 & 7200.81 & 37.43 & 0.26 \\ \hline
20 & 3 & 4 & 44.18 & 7200.59 & 19.15 & 0.0 \\ \hline
20 & 4 & 1 & 0.0 & 483.78 & 13.51 & 0.0 \\ \hline
20 & 4 & 2 & 55.27 & 7200.73 & 12.43 & 0.0 \\ \hline
20 & 4 & 3 & 15.84 & 7200.84 & 3191.77 & 0.28 \\ \hline
20 & 4 & 4 & 40.08 & 7200.79 & 20.47 & 0.0 \\ \hline
\end{tabular}%
}
\caption{Symmetric SEC results with initial solution}
\label{table:sSEC}
\end{table}

\begin{table}[H]
\centering
\tiny
\resizebox{\textwidth}{!}{%
\begin{tabular}{|c|c|c|c|c|c|c|}
\hline
\textbf{Size} & \textbf{Radii} & \textbf{Mode} & \textbf{Final Gap} & \textbf{Exact Time} & \textbf{Heur. Time} & \textbf{\% Improved Gap} \\ \hline
5 & 1 & 1 & 0.0 & 0.61 & 1.47 & 0.02 \\ \hline
5 & 1 & 2 & 0.0 & 0.12 & 1.3 & 0.0 \\ \hline
5 & 1 & 3 & 0.0 & 0.21 & 2.01 & 0.08 \\ \hline
5 & 1 & 4 & 0.0 & 0.25 & 1.64 & 0.06 \\ \hline
5 & 2 & 1 & 0.0 & 0.27 & 1.33 & 2.38 \\ \hline
5 & 2 & 2 & 0.0 & 0.13 & 1.25 & 0.01 \\ \hline
5 & 2 & 3 & 0.0 & 0.17 & 1.9 & 0.64 \\ \hline
5 & 2 & 4 & 0.0 & 0.17 & 1.49 & 2.22 \\ \hline
5 & 3 & 1 & 0.0 & 0.44 & 1.28 & 1.01 \\ \hline
5 & 3 & 2 & 0.0 & 0.16 & 1.28 & 4.02 \\ \hline
5 & 3 & 3 & 0.0 & 0.17 & 1.95 & 0.73 \\ \hline
5 & 3 & 4 & 0.0 & 0.3 & 1.68 & 0.44 \\ \hline
5 & 4 & 1 & 0.0 & 0.34 & 1.28 & 15.36 \\ \hline
5 & 4 & 2 & 0.0 & 0.14 & 1.26 & 8.52 \\ \hline
5 & 4 & 3 & 0.0 & 0.16 & 1.98 & 1.55 \\ \hline
5 & 4 & 4 & 0.0 & 0.28 & 1.7 & 4.39 \\ \hline
10 & 1 & 1 & 0.0 & 1.93 & 2.63 & 0.09 \\ \hline
10 & 1 & 2 & 0.0 & 0.75 & 2.3 & 0.01 \\ \hline
10 & 1 & 3 & 0.0 & 0.72 & 4.49 & 0.3 \\ \hline
10 & 1 & 4 & 0.0 & 1.52 & 5.11 & 0.05 \\ \hline
10 & 2 & 1 & 0.0 & 38.83 & 2.33 & 0.01 \\ \hline
10 & 2 & 2 & 0.0 & 14.14 & 2.11 & 2.29 \\ \hline
10 & 2 & 3 & 0.0 & 2.23 & 15.6 & 0.97 \\ \hline
10 & 2 & 4 & 0.0 & 2.52 & 3.28 & 0.77 \\ \hline
10 & 3 & 1 & 0.0 & 487.94 & 2.44 & 0.16 \\ \hline
10 & 3 & 2 & 0.0 & 35.81 & 2.22 & 1.8 \\ \hline
10 & 3 & 3 & 0.0 & 13.28 & 14.76 & 2.94 \\ \hline
10 & 3 & 4 & 0.0 & 133.81 & 3.1 & 1.16 \\ \hline
10 & 4 & 1 & 19.28 & 3513.38 & 2.39 & 0.47 \\ \hline
10 & 4 & 2 & 0.0 & 238.98 & 2.28 & 13.46 \\ \hline
10 & 4 & 3 & 0.0 & 20.25 & 21.99 & 5.45 \\ \hline
10 & 4 & 4 & 0.0 & 1142.17 & 3.57 & 5.52 \\ \hline
15 & 1 & 1 & 0.0 & 18.58 & 5.98 & 0.15 \\ \hline
15 & 1 & 2 & 0.0 & 3.38 & 2.98 & 0.35 \\ \hline
15 & 1 & 3 & 0.0 & 314.57 & 9.09 & 0.2 \\ \hline
15 & 1 & 4 & 0.0 & 10.94 & 8.58 & 2.26 \\ \hline
15 & 2 & 1 & 6.69 & 3135.29 & 3.7 & 0.67 \\ \hline
15 & 2 & 2 & 0.0 & 2460.78 & 3.8 & 4.81 \\ \hline
15 & 2 & 3 & 0.0 & 14.58 & 87.78 & 3.68 \\ \hline
15 & 2 & 4 & 0.0 & 1052.16 & 8.27 & 2.28 \\ \hline
15 & 3 & 1 & 46.33 & 5760.56 & 4.12 & 4.04 \\ \hline
15 & 3 & 2 & 20.79 & 5760.84 & 4.51 & 3.57 \\ \hline
15 & 3 & 3 & 0.0 & 322.77 & 420.37 & 4.26 \\ \hline
15 & 3 & 4 & 14.07 & 5865.82 & 7.74 & 2.37 \\ \hline
15 & 4 & 1 & 100.0 & 7200.47 & 5.23 & 3.13 \\ \hline
15 & 4 & 2 & 20.2 & 4421.25 & 4.35 & 9.72 \\ \hline
15 & 4 & 3 & 0.19 & 2195.2 & 237.9 & 6.28 \\ \hline
15 & 4 & 4 & 21.6 & 7200.42 & 6.55 & 11.68 \\ \hline
20 & 1 & 1 & 0.0 & 743.32 & 13.95 & 2.78 \\ \hline
20 & 1 & 2 & 0.0 & 110.91 & 62.75 & 9.0 \\ \hline
20 & 1 & 3 & 1.6 & 2896.62 & 17.67 & 0.13 \\ \hline
20 & 1 & 4 & 1.16 & 3112.33 & 20.05 & 1.19 \\ \hline
20 & 2 & 1 & 37.26 & 7200.45 & 90.1 & 9.42 \\ \hline
20 & 2 & 2 & 19.43 & 7200.68 & 5.23 & 4.37 \\ \hline
20 & 2 & 3 & 0.0 & 1051.22 & 254.06 & 5.93 \\ \hline
20 & 2 & 4 & 17.15 & 7200.48 & 19.33 & 2.06 \\ \hline
20 & 3 & 1 & 78.16 & 7200.35 & 6.05 & 4.07 \\ \hline
20 & 3 & 2 & 34.44 & 5763.72 & 5.63 & 6.17 \\ \hline
20 & 3 & 3 & 0.73 & 4530.27 & 299.19 & 5.04 \\ \hline
20 & 3 & 4 & 30.71 & 7200.52 & 22.32 & 7.34 \\ \hline
20 & 4 & 1 & 100.0 & 7200.63 & 8.71 & 6.0 \\ \hline
20 & 4 & 2 & 41.32 & 7200.67 & 35.68 & 25.64 \\ \hline
20 & 4 & 3 & 1.83 & 7200.52 & 307.92 & 7.45 \\ \hline
20 & 4 & 4 & 29.84 & 7200.52 & 33.81 & 9.7 \\ \hline
\end{tabular}%
}
\caption{MTZ results with initial solution}
\label{table:MTZ}
\end{table}

To have a clearer view of the results we also present some comparative boxplots obtained from the tables above. First of all, we report the final gap after two hours of running time. We have gathered all the information in Figure \ref{fig:final_gap}. It is organized in four rows corresponding with the different modes: row $i$ shows results for Mode $i$, $i=1,\ldots,4$. Within each row, there are four columns one per radius size. Then, each graph within this $4 \times 4$ grid contains comparative diagrams for the four different problem sizes considered, namely $n=5,10,15,20$ neighbors. Finally, for each problem size we compare the results obtained for our three different formulations Miller-Tucker-Zemlin (MTZ), Subtour elimination (SEC) and the symmetric version of SEC (sSEC). For instance, looking at the second row, third column (Mode 2, Radius 3) one can see that for $n=5$ and $10$, which correspond to the first two boxes the gap of the three formulations is zero in all the instances (actually the boxes are collapsed to lines). However, for $n=15$ and $20$ MTZ seems to outperforms SEC and sSEC, and moreover, sSEC is also better than SEC since the green boxes lie below the orange ones. As a general comment, one can observe that for all combinations of factors MTZ (the blue boxes) outperforms SEC (orange) and sSEC (green) and also sSEC reports smaller gaps than SEC, with the only exception of Mode 3 where SEC seems to work better than sSEC.

Finally, we have included in Figure \ref{fig:profile} a performance profile graph of number of instances solved versus time. This figure shows that SEC formulation is the weakest since it solves less number of instances in the same cpu time. The comparison between MTZ and sSEC is not that clear although in the long run MTZ seems to outperforms sSEC since the former solves more instances than the latter.

\begin{figure}[h!]
 \centering
 \includegraphics[width=1\linewidth]{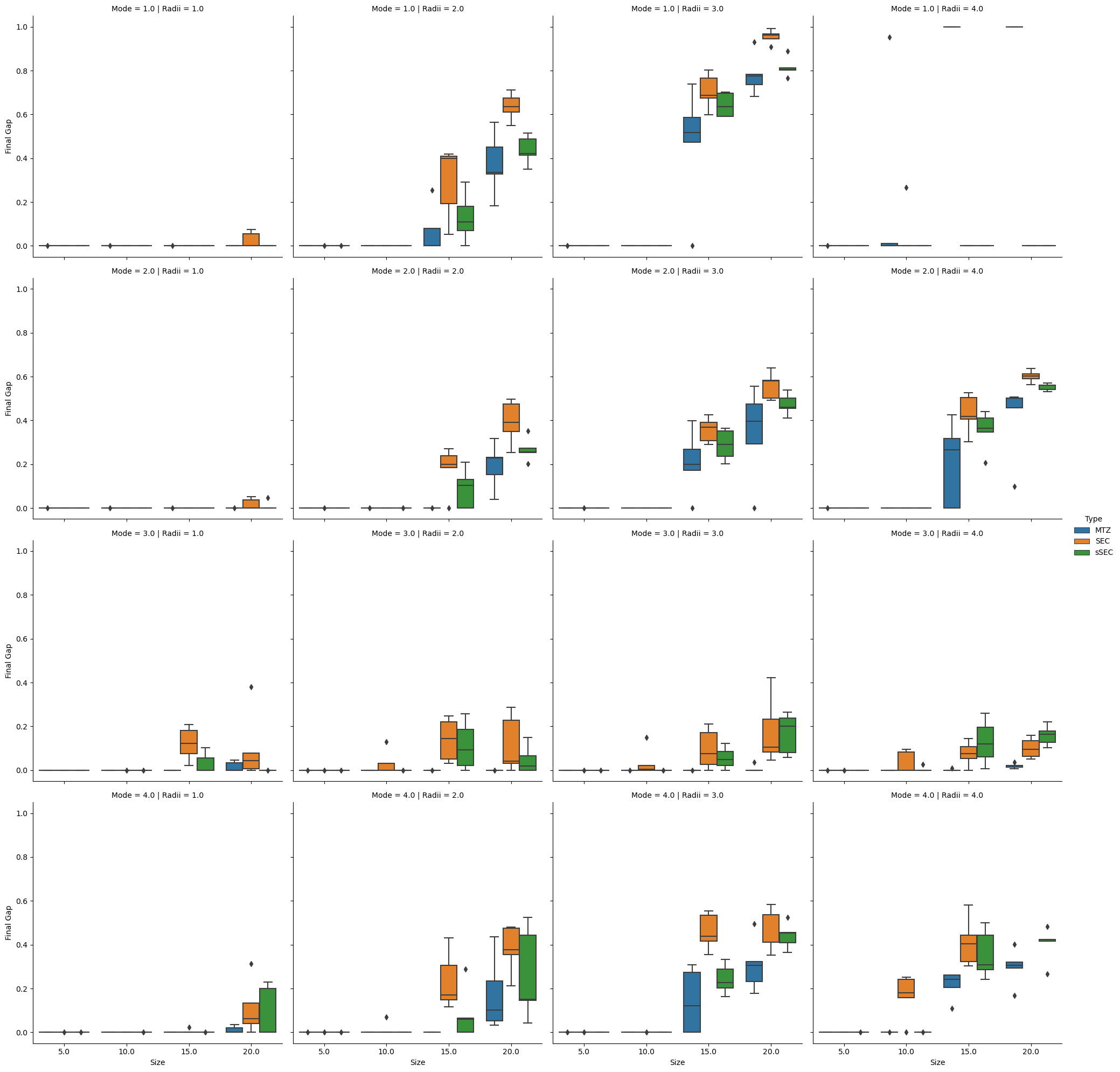}
 \caption{Final gap after 7200 seconds}
 \label{fig:final_gap}
\end{figure}

\begin{figure}[h!]
 \centering
 \includegraphics[width=0.75\linewidth]{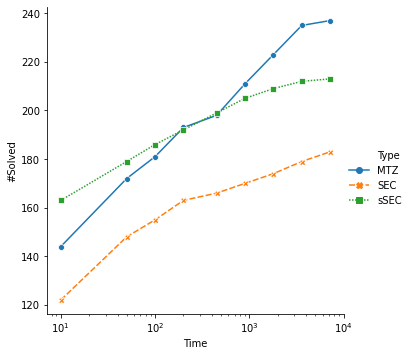}
 \caption{Performance profile: Time vs \#Solved}
 \label{fig:profile}
\end{figure}



We also compare next, the behavior of SEC and sSEC in number of cuts required by these two formulations to solve the corresponding problems. As before, we have organized the information in a $4 \times 4$ grid of boxplox graphs. The reader can easily observe that sSEC always requires less number of cuts (blue boxes  corresponding to SEC are always above orange ones corresponding to sSEC) showing that this formulation is more accurate than SEC: reports smaller gaps (see Figure \ref{fig:final_gap}) and needs less number of cuts.

\begin{figure}[h!]
 \centering
 \includegraphics[width=1\linewidth]{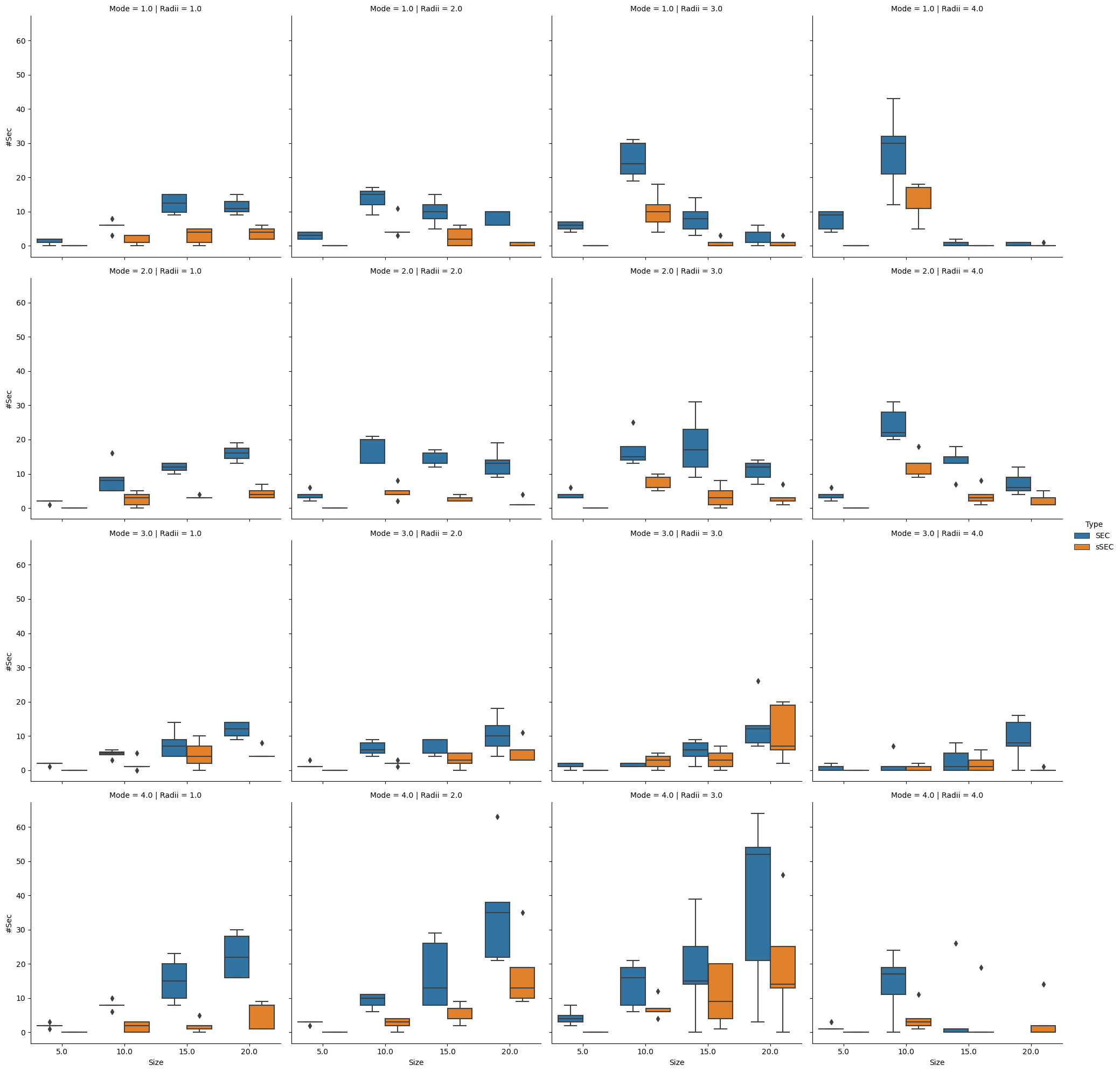}
 \caption{Number of SEC added in the execution time}
 \label{fig:sec}
\end{figure}

\section{Concluding remarks}

This paper has analyzed a novel version of the crossing postman problem with neighbors. We have shown that the problem can be cast within the framework of the family of mixed integer second order cone programming and several exact formulations are presented and computationally tested on an extensive testbed of instances. Additionally, we have presented a heuristic algorithm providing good quality solutions and that can be considered for large scale problems and also as a procedure to obtain initial solutions to be loaded into exact solvers with the exact formulations. Computational results show that the problem is very hard and already for problems with 20 neighbors exact approaches fail to find optimal solution within two hours of cpu time. 

This research opens up  several research lines and extensions of the basic problem that can be included in the model. Among them we mention finding better formulations or decomposition schemes that help in solving exactly larger instance sizes; and alternative heuristic algorithms that allow tackling large scale problems. Other extensions of the proposed models considered in this paper are the consideration of unions of second order cone representable sets that can fit more general neighborhoods,  barriers that represent some buildings that the tour can not cross or including conditions that  control the displacement on the border of nonlinear neighborhoods like circles. Some of these topics will be the subject of a follow up paper.

\section*{Acknowledgements}

This research has been partially supported by Spanish Ministry of Education and Science/FEDER grant number  MTM2016-74983-C02-(01-02), and projects FEDER-US-1256951, CEI-3-FQM331 and  \textit{NetmeetData}: Ayudas Fundaci\'on BBVA a equipos de investigaci\'on cient\'ifica 2019.

\bibliographystyle{acm}

\begin{thebibliography}{10}

\bibitem{Amorosi2018}
{\sc {Amorosi}, L., {Chiaraviglio}, L., {D'Andreagiovanni}, F., and
  {Blefari-Melazzi}, N.}
\newblock Energy-efficient mission planning of uavs for 5g coverage in rural
  zones.
\newblock In {\em 2018 IEEE International Conference on Environmental
  Engineering (EE)\/} (2018), pp.~1--9.

\bibitem{Benders1962}
{\sc Benders, J.~F.}
\newblock Partitioning procedures for solving mixed-variables programming
  problems.
\newblock {\em Numerische Mathematik 4\/} (1962), 238--252.

\bibitem{Blanco2017}
{\sc Blanco, V., Fern{\'{a}}ndez, E., and Puerto, J.}
\newblock Minimum spanning trees with neighborhoods: Mathematical programming
  formulations and solution methods.
\newblock {\em European Journal of Operational Research 262}, 3 (nov 2017),
  863--878.

\bibitem{Campbell2018}
{\sc Campbell, J.~F., Corber{\'{a}}n, {\'{A}}., Plana, I., and Sanchis, J.~M.}
\newblock Drone arc routing problems.
\newblock {\em Networks 72}, 4 (oct 2018), 543--559.

\bibitem{Corberan2015}
{\sc Corber{\'{a}}n, {\'{A}}., and Laporte, G.}, Eds.
\newblock {\em {Arc Routing: Problems, Methods, and Applications}}.
\newblock Society for Industrial and Applied Mathematics, Philadelphia (USA),
  feb 2015.

\bibitem{Edmonds2003}
{\sc Edmonds, J.}
\newblock {\em Combinatorial Optimization — Eureka, You Shrink!}
\newblock Springer, 2003, ch.~Submodular Functions, Matroids, and Certain
  Polyhedra, pp.~11--26.

\bibitem{Garfinkel1999}
{\sc Garfinkel, R., and Webb, J.}
\newblock On crossings, the crossing postman problem, and the rural postman
  problem.
\newblock {\em Networks: An International Journal 34}, 3 (1999), 173--180.

\bibitem{Gentilini2013}
{\sc Gentilini, I., Margot, F., and Shimada, K.}
\newblock The travelling salesman problem with neighbourhoods: Minlp solution.
\newblock {\em Optimization Methods and Software 28}, 2 (2013), 364--378.

\bibitem{Geoffrion1972}
{\sc Geoffrion, A.~M.}
\newblock {Generalized Benders decomposition}.
\newblock {\em Journal of Optimization Theory and Applications 10}, 4 (1972),
  237--260.

\bibitem{GurobiOptimization2019}
{\sc {Gurobi Optimization}, L.}
\newblock {Gurobi Optimizer Reference Manual, Version 8.1.0}, 2019.

\bibitem{Knight2016}
{\sc Knight, R.}
\newblock {Drones deliver healthcare}, 2016.

\bibitem{Lavars2015}
{\sc Lavars, N.}
\newblock {Amazon to begin testing new delivery drones in the US}.
\newblock {\em New Atlas 13\/} (2015).

\bibitem{Lobo1998}
{\sc Lobo, M.~S., Vandenberghe, L., Boyd, S., and Lebret, H.}
\newblock Applications of second-order cone programming.
\newblock {\em Linear Algebra and its Applications 284}, 1 (1998), 193 -- 228.
\newblock International Linear Algebra Society (ILAS) Symposium on Fast
  Algorithms for Control, Signals and Image Processing.

\bibitem{McCormick1976}
{\sc McCormick, G.~P.}
\newblock Computability of global solutions to factorable nonconvex programs:
  Part i — convex underestimating problems.
\newblock {\em Mathematical Programming 10\/} (1976), 147--175.

\bibitem{Miller1960}
{\sc Miller, C.~E., Tucker, A.~W., and Zemlin, R.~A.}
\newblock Integer programming formulation of traveling salesman problems.
\newblock {\em J. ACM 7}, 4 (Oct. 1960), 326–329.

\bibitem{Mladenovic1997}
{\sc Mladenovi{\'{c}}, N., and Hansen, P.}
\newblock Variable neighborhood search.
\newblock {\em Computers {\&} Operations Research 24}, 11 (nov 1997),
  1097--1100.

\bibitem{Orloff1974}
{\sc Orloff, C.~S.}
\newblock A fundamental problem in vehicle routing.
\newblock {\em Networks 4}, 1 (1974), 35--64.

\bibitem{Otto2018}
{\sc Otto, A., Agatz, N., Campbell, J., Golden, B., and Pesch, E.}
\newblock Optimization approaches for civil applications of unmanned aerial
  vehicles (uavs) or aerial drones: A survey.
\newblock {\em Networks 72}, 4 (2018), 411--458.

\bibitem{Pereira2018}
{\sc Pereira, V.}
\newblock {Project:
  Metaheuristic-Local{\_}Search-Variable{\_}Neighborhood{\_}Search}, 2018.

\bibitem{Sawik2016}
{\sc Sawik, T.}
\newblock {A note on the Miller-Tucker-Zemlin model for the asymmetric
  traveling salesman problem}.
\newblock {\em Bulletin of the Polish Academy of Sciences Technical Sciences
  64}, 3 (2016), 517--520.

\bibitem{Velednitsky2017}
{\sc Velednitsky, M.}
\newblock Short combinatorial proof that the {DFJ} polytope is contained in the
  {MTZ} polytope for the asymmetric traveling salesman problem.
\newblock {\em Operations Research Letters 45}, 4 (jul 2017), 323--324.

\bibitem{Yuan2017}
{\sc Yuan, B., and Zhang, T.}
\newblock {Towards Solving TSPN with Arbitrary Neighborhoods: A Hybrid
  Solution}.
\newblock In {\em ACALCI\/} (2017), pp.~204--215.

\end{thebibliography}

\end{document}